\documentclass[12]{elsarticle}

\usepackage{amsthm}
\usepackage{amssymb}
\usepackage{amsmath}
\usepackage{comment}
\usepackage{verbatim}

\newtheorem{theorem}{Theorem}[section]
\newtheorem{conjecture}[theorem]{Conjecture}
\newtheorem{lemma}[theorem]{Lemma}
\newtheorem*{definition}{Definition}
\newtheorem{claim}[theorem]{Claim}

\newcommand{\PP}{\mathbb P}
\newcommand{\EE}{\mathbb E}

\begin{document}

\title{Disproving the normal graph conjecture}

\author[toulouse]{Ararat Harutyunyan\fnref{ANRFQNRT}\corref{cor1}}
\fnref{FQNRT}
\ead{ararat.harutyunyan@lamsade.dauphine.fr}

\author[gscop]{Lucas Pastor\fnref{ANR}}
\ead{lucas.pastor@g-scop.grenoble-inp.fr}

\author[ens]{St\'ephan Thomass\'e\fnref{ANR}}
\ead{stephan.thomasse@ens-lyon.fr}

\cortext[cor1]{Corresponding author}

\fntext[ANR]{These authors were partially supported by ANR Project \textsc{Stint} under Contract \textsc{ANR-13-BS02-0007}.}
\fntext[ANRFQNRT]{This author was partially supported by an FQNRT post-doctoral 
research grant and by ANR Project \textsc{Stint} under Contract \textsc{ANR-13-BS02-0007}.}

\address[toulouse]{LAMSADE, CNRS
Universit\'{e} Paris-Dauphine, PSL Research University\\
75016 Paris, France}
\address[ens]{ENS de Lyon, Laboratoire LIP, Universit\'e de Lyon, France.\\
Institut Universitaire de France.}
\address[gscop]{Laboratoire G-SCOP, University of Grenoble-Alpes, France.}

\begin{frontmatter}
\begin{abstract}
    A graph $G$ is called normal if there exist two coverings, $\mathbb{C}$ and $\mathbb{S}$ of its vertex set such 
    that every member of $\mathbb{C}$ induces a clique in $G$, every member of $\mathbb{S}$ induces an 
    independent set in $G$ and $C \cap S \neq \emptyset$ for every $C \in \mathbb{C}$ and $S \in \mathbb{S}$. 
    It has been conjectured by De Simone and K\"orner in 1999 that a graph $G$ is normal if 
    $G$  does not contain $C_5$, $C_7$ and $\overline{C_7}$ as an induced subgraph. We disprove this conjecture.
\end{abstract}

\begin{keyword}
    normal graphs \sep perfect graphs \sep random graphs \sep probabilistic method
\end{keyword}
\end{frontmatter}
\section{Introduction}

The motivation of the study of normal graphs comes from perfect graphs. A graph $G$ is \emph{perfect} 
if $\chi(H) = \omega(H)$ for every induced subgraph $H$ of $G$. 
Claude Berge first introduced perfect graphs in 1960. His motivation came, in part, from
determining the zero-error capacity of a discrete memoryless channel. This can be formulated as finding the Shannon capacity $C(G)$ of a graph $G$ as follows:
\begin{eqnarray*}
C(G) = \lim_{n \to \infty} \frac{1}{n} \log \omega(G^n)
\end{eqnarray*}
where $G^n$ is the $n^{\text{th}}$ co-normal power of $G$. The \emph{co-normal} product (also called the OR product) $G_1*G_2$ of two graphs $G_1$ and $G_2$
is the graph with vertex set $V(G_1) \times V(G_2)$, where vertices $(v_1, v_2)$ and $(u_1, u_2)$ are adjacent
if $u_1$ is adjacent to $v_1$ or $u_2$ is adjacent to $v_2$. Shannon noticed that
$\omega(G^n) = (\omega(G))^n$ whenever $\omega(G) = \chi(G)$. Since $\omega(G^n) = (\omega(G))^n$ holds for all graphs $G$ with
$\omega(G) = \chi(G)$, one might have expected that perfect graphs are closed under co-normal products. K\"orner and Longo in~\cite{two-step-encoding} proved 
this to be false. This motivated K\"orner \cite{normal-graph} 
to study graphs which are closed under co-normal products.
We note that a \emph{covering} of $G$ is a set of subsets of $V(G)$ whose union
is $V(G)$.

\begin{definition}
    A graph $G$ is normal if there exist two coverings, $\mathbb{C}$ and $\mathbb{S}$ of its vertex set such 
    that every member of $\mathbb{C}$ induces a clique in $G$, every member of $\mathbb{S}$ induces an 
    independent set in $G$ and $C \cap S \neq \emptyset$ for every $C \in \mathbb{C}$ and $S \in \mathbb{S}$.
\end{definition}

K\"orner showed that all co-normal products of normal graphs are normal~\cite{normal-graph}.
In the same paper, he also showed that all perfect graphs are normal.
It turns out that normal graphs, like perfect graphs, also have a close relationship with graph entropy. 
The \emph{entropy of a graph} $G$ with respect to a probability distribution $P$ on $V(G)$ is defined as:

\begin{eqnarray*}
    H(G,P) = \lim_{t \to \infty} \min_{U \subseteq V(G^t), P^t(U) > 1 - \epsilon} \frac{1}{t} \log \chi(G^t[U])
\end{eqnarray*}
where $P^t(U) = \sum_{\textbf{x} \in U} \prod_{i=1}^{t}P(x_i)$ and $\epsilon \in (0, 1)$ 
(we note that the limit is independent of $\epsilon$ as shown by K\"orner \cite{K73}). The graph entropy
is sub-additive~\cite{gabor-survey} with respect to complementary graphs:
\begin{eqnarray*}
    H(P) \leq H(G,P) + H(\overline{G},P)
\end{eqnarray*}
for all $G$ and all $P$, where $H(P) = \sum_{i=1}^{n}p_i \log \frac{1}{p_i}$. In fact, the value
\begin{eqnarray*}
    \max_{P}\{H(G,P) + H(\overline{G},P) - H(P)\}
\end{eqnarray*}
is also a measure of how imperfect a graph $G$ is, relating to a parameter introduced
in~\cite{graph-imperfection-0} by McDiarmid called the \emph{imperfection
ratio} of graphs (see also~\cite{graph-imperfection-1} and~\cite{graph-imperfection-2}),
which itself derives its motivation from the radio channel assignment 
problems (see~\cite{graph-imperfection-0} for details). 
%
%
In~\cite{normal-entropy} Csisz\'ar et. al showed that:
\begin{eqnarray*}
    H(P) = H(G,P) + H(\overline{G}, P) \text{ for all $P$ if and only if $G$ is perfect}.
\end{eqnarray*}

The relaxed version, i.e., equality holds for at least one $P$, is true whenever $G$ is normal, as shown 
in~\cite{split-entropies}:

\begin{eqnarray*}
    H(P) = H(G,P) + H(\overline{G}, P) \text{ for at least one $P$ if and only if $G$ is normal}
\end{eqnarray*}

%

It has been proved that 
line-graphs of cubic graphs~\cite{line-graph-cubic}, circulants~\cite{normal-circulant} and a 
few classes of sparse graphs~\cite{normal-sparse} are normal. 
Normal graphs have also been studied for regular and random regular graphs. Hosseini et al. \cite{HMR1, HMR2} have
shown that all subcubic triangle-free graphs are normal as well as that almost all $d$-regular graphs
are normal when $d$ is fixed.


By definition it follows that a graph is normal if and only if its complement is normal. 
The simplest graphs that are known to be normal but not perfect are the odd cycles of length 
at least $9$ (see~\cite{normal-graph}). In fact, $C_5$, $C_7$ and $\overline{C_7}$ are 
the only minimally known graphs which are not normal. To this end, De Simone and K\"orner
~\cite{normal-conjecture} conjectured the following.

\begin{conjecture}[The Normal Graph Conjecture]\label{conj:normal-graph}
    A graph with no $C_5$, $C_7$ and $\overline{C_7}$ as induced subgraph is normal.
\end{conjecture}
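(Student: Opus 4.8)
The plan is to prove Conjecture~\ref{conj:normal-graph} by a decomposition argument: peel an arbitrary graph $G$ in the class down to a short list of ``basic'' graphs along cuts across which normality is preserved, and then exhibit explicit coverings for those basic graphs. \textbf{Step 1: closure properties of normality.} First I would record that normal graphs are closed under complementation (immediate from the definition), under disjoint union and join, and, more generally, under \emph{substitution}: if $H$ is normal and each $H_v$ ($v\in V(H)$) is normal, then the graph obtained from $H$ by substituting $H_v$ for each $v$ is normal. Given normal coverings $(\mathbb{C}_H,\mathbb{S}_H)$ of $H$ and $(\mathbb{C}_v,\mathbb{S}_v)$ of each $H_v$, one lifts a clique of $\mathbb{C}_H$ passing through $v$ by replacing $v$ with each clique of $\mathbb{C}_v$, dually for stable sets, and then checks the cross-intersection property. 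The same gluing idea shows that normality passes through a clique cutset. Hence it suffices to prove the conjecture for graphs $G$ in the class that are prime for modular decomposition, have no clique cutset, and are connected with connected complement.

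\textbf{Step 2: the known basic classes.} By the results quoted above, perfect graphs are normal and odd holes $C_{2k+1}$, $k\ge 4$, are normal; with Step~1 this gives odd antiholes and every graph substitution-built from perfect graphs and odd holes and antiholes. \textbf{Step 3: a structure theorem (the crux).} The heart of the proof must be a statement of the shape: every prime, clique-cutset-free $(C_5,C_7,\overline{C_7})$-free graph is perfect, or an odd hole or antihole, or decomposes further --- along a star cutset, a homogeneous pair, a proper $2$-join, and so on --- in a way that again preserves normality. This is the analogue, for a far weaker forbidden list, of the decomposition program behind the Strong Perfect Graph Theorem, and I would try to adapt those techniques: pin down exactly how $C_9$'s, $\overline{C_9}$'s and large bipartite or split-like pieces can attach to one another, and show that every remaining configuration can be cut apart.

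I expect Step~3 to be where the argument stands or falls, and I am genuinely unsure it goes through. Forbidding only $C_5$, $C_7$ and $\overline{C_7}$ leaves the class very rich, and it plausibly contains dense graphs --- built, say, by randomized constructions over a long cyclic substrate --- whose maximum cliques and maximum stable sets are all small compared to $|V(G)|$. For such a $G$, since a clique and a stable set meet in at most one vertex, any admissible coverings must satisfy $|C\cap S|=1$ for \emph{every} chosen clique $C\in\mathbb{C}$ and stable set $S\in\mathbb{S}$; double counting $\sum_{C\in\mathbb{C},\,S\in\mathbb{S}}|C\cap S|$ against $\sum_{v}d_{\mathbb{C}}(v)\,d_{\mathbb{S}}(v)$, where $d_{\mathbb{C}}(v)$ (resp.\ $d_{\mathbb{S}}(v)$) is the number of members of $\mathbb{C}$ (resp.\ $\mathbb{S}$) containing $v$, together with $|C|\le\omega(G)$, $|S|\le\alpha(G)$ and the requirement that $\mathbb{C}$ and $\mathbb{S}$ cover $V(G)$, seems to push the parameters into a regime that looks unsatisfiable. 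Sharpening that heuristic would not prove the conjecture: it would instead isolate an explicit obstruction and point toward a probabilistic construction of a graph in the class that is provably not normal. So while the structural reduction is the natural route, the basic case is precisely where I expect this plan --- and, conceivably, the conjecture itself --- to break.

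An analytic route could replace Step~3. By the entropy characterization quoted above it is enough to produce, for each $G$ in the class, a distribution $P$ of full support with $H(P)=H(G,P)+H(\overline{G},P)$; via the vertex-packing-polytope formula $H(G,P)=\min_{a\in\mathrm{STAB}(G)}\sum_i p_i\log(1/a_i)$ this reduces to finding strictly positive $a\in\mathrm{STAB}(G)$ and $b\in\mathrm{STAB}(\overline{G})$ with $\sum_i a_i b_i\ge 1$, after which one renormalises. For perfect $G$ this is easy, as $\mathrm{STAB}(\overline{G})$ is the antiblocker of $\mathrm{STAB}(G)$; for imperfect $G$ it is a genuine constraint, and the same dense ``circular'' examples are exactly those for which $\sup\{a^{\top}b\}$ over interior points appears to stay strictly below $1$. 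Thus this route runs into the same obstacle, now in polyhedral form.
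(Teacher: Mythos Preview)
Your plan cannot succeed because the statement is false: the paper to which this proposal is attached \emph{disproves} the Normal Graph Conjecture. In particular, Step~3 is not merely ``where the argument stands or falls'' --- it falls, and no structure theorem of the shape you describe exists. The paper exhibits (probabilistically) a graph of girth at least $8$, hence with no induced $C_5$, $C_7$ or $\overline{C_7}$, that is not normal.

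What is striking is that your own misgivings in Step~3 are essentially the right idea, and your analytic/polyhedral reformulation points at the same obstruction. The paper makes this rigorous, but not via the $\alpha(G)\omega(G)$ double count you sketch. Instead it works as follows. Take $G=G_{n,p}$ with $p=n^{-9/10}$; with positive probability $G$ has fewer than $4n^{0.7}$ cycles of length $\le 7$, maximum degree $\le 2n^{0.1}$, and $\alpha(G)<10\,n^{0.9}\log n$. Delete one vertex from each short cycle to get a girth-$8$ graph $G'$. If $G'$ were normal, then since it is triangle-free every clique in a normal covering is a $K_1$ or $K_2$, and one can prune to a \emph{star covering}: a spanning family of vertex-disjoint stars whose edges are the $K_2$'s of $\mathbb{C}$. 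The number of stars is at most $\alpha(G)$ (each $S\in\mathbb{S}$ must hit every star). Now form a digraph $Q^*$ on the star centers by putting $x_i\to x_j$ whenever some leaf of the star at $x_i$ is adjacent to $x_j$. The key combinatorial lemma is that for any \emph{out-section} $X\subseteq Q$ (a set reachable from a single vertex in $Q^*$), the union of the leaves of the stars centered in $X$ is an independent set of $G'$ --- this is forced by the cross-intersection condition of a normal covering. The probabilistic work then shows that, after discarding a negligible set $J$ of small stars, the random graph deterministically admits (for every admissible choice of centers) an out-section whose leaves number at least $n^{0.95}$, contradicting $\alpha(G)<10\,n^{0.9}\log n$.

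So the correct move from your proposal is to abandon Steps~1--3 as a proof strategy and instead sharpen the heuristic you already wrote down into a disproof. Your $\sum_{C,S}|C\cap S|$ counting is too coarse on its own (it does not obviously separate $G_{n,p}$ from, say, long odd cycles, which \emph{are} normal); the extra structural leverage comes from the star-covering/out-section mechanism, which converts the cross-intersection constraint into a lower bound on $\alpha$ that random graphs violate.
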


By analogy with perfect graphs, one can ask whether a graph $G$ is \emph{strongly normal}, i.e.,
every induced subgraph of $G$ is normal. As for perfect graphs, it is natural to try 
to characterize strongly normal graphs by excluding forbidden induced subgraphs. 
This leads to a restatement of Conjecture \ref{conj:normal-graph}.

\begin{conjecture}[\cite{normal-conjecture}]
    A graph $G$ is strongly normal if and only if neither $G$ nor its complement contain 
    a $C_5$ or a $C_7$ as an induced subgraph.
\end{conjecture}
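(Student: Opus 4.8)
The plan to refute this conjecture --- equivalently Conjecture~\ref{conj:normal-graph}, since the two are equivalent (the class of graphs with no induced $C_5$, $C_7$ or $\overline{C_7}$ is hereditary, and $C_5$, $C_7$, $\overline{C_7}$ are themselves not normal) --- is to build, by the probabilistic method, a non-normal graph avoiding those three induced subgraphs. The first reduction is to seek $G$ triangle-free: a triangle-free graph contains no $\overline{C_7}$ (which has triangles), and if in addition it has girth at least $8$ it contains no $C_5$ and no $C_7$ either (a chord of a $5$- or $7$-cycle creates a shorter cycle, so in girth $\ge 8$ there is no such cycle at all). Hence it is enough to produce a triangle-free graph of girth $\ge 8$ that is not normal. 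For this I would let $G$ be obtained from $G(n,p)$ with $p=n^{-1+c}$, $0<c<\tfrac{1}{7}$ (or from a slowly growing random regular graph), by deleting one vertex from each cycle of length $\le 7$; with high probability only $o(n)$ vertices disappear, so $G$ has girth $\ge 8$ and keeps the features we will exploit: $\alpha(G)$ small (of order $n^{1-c}\log n$), $\Delta(G)=(1+o(1))n^{c}$, and strong pseudorandomness (e.g.\ any two disjoint vertex sets of size $\ge C\alpha(G)$ span an edge, plus a spectral gap).

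The technical core is a structural description of normality for triangle-free graphs, which I would prove as a lemma. In a triangle-free graph every clique has at most two vertices, so a clique cover is an edge cover, and (shrinking it, and using that $G$ has no isolated vertex) we may take it to be a \emph{minimal} edge cover, i.e.\ a partition of $V(G)$ into stars $(c_j,L_j)$, $j=1,\dots,m$ (center $c_j$, leaf set $L_j\ne\varnothing$). Unwinding the cross-intersection condition against the independent-set cover $\{S_i\}_{i\in[k]}$, and writing $A_j:=\{i : c_j\in S_i\}$ (so the leaves of star $j$ lie exactly in the $S_i$ with $i\notin A_j$), one finds that $G$ is normal iff for some star partition there exist proper nonempty subsets $A_j\subsetneq[k]$ compatible with every edge between two distinct stars: a $c_j$--$c_{j'}$ edge forces $A_j\cap A_{j'}=\varnothing$, a $c_j$--$L_{j'}$ edge forces $A_j\subseteq A_{j'}$, and an $L_j$--$L_{j'}$ edge forces $A_j\cup A_{j'}=[k]$. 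Reading $x_j$ as the predicate ``$i\in A_j$'', this is exactly a $2$-SAT instance $\Phi$ attached to the star partition; the requirement that each $A_j$ be proper and nonempty says $\Phi$ must have a \emph{non-degenerate} model --- no variable forced to a constant, i.e.\ in its implication digraph there is no path $x_j\to\neg x_j$ nor $\neg x_j\to x_j$. So $G$ is non-normal iff for \emph{every} partition of $V(G)$ into stars the associated $\Phi$ has a forced variable.

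What remains is to check that the random $G$ above admits no good star partition, which I would do in two steps. Counting: for any star partition each $S_i$ is independent, so (summing $|\{i : v\in S_i\}|$ over $v$) $\ \sum_i|S_i| = \sum_j\big(|A_j| + (k-|A_j|)|L_j|\big)\ \ge\ km$, since each summand is $\ge k$ (it equals $k$ when $|L_j|=1$ and is non-decreasing in $|L_j|$); from $|S_i|\le\alpha(G)$ this gives $m\le\alpha(G)$, so the stars are forced to be large --- of average size about $n/\alpha(G)$. Forcing: in a pseudorandom graph of girth $\ge 8$ the cross-edges leaving such large stars wire the implication digraph of $\Phi$ together tightly enough that it must contain a path $x_j\to\neg x_j$ for some $j$ --- in effect the digraph is strongly connected, and the girth hypothesis only excludes \emph{short} forcing cycles, which is not enough for the adversary to avoid being forced. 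I expect \emph{this forcing step to be the principal obstacle}: one must defeat \emph{all} star partitions simultaneously, and since there are super-exponentially many of them a naive union bound is too weak, so the argument has to be run structurally, combining the expansion of $G$, the largeness of stars supplied by the counting step, and the skew-symmetry of $2$-SAT implication digraphs, to rule out any consistent non-degenerate labelling $A_1,\dots,A_m$. This done, $G$ is the desired counterexample and both conjectures fail.
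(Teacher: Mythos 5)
Your strategy --- sparse random graph, remove short cycles, reduce normality of a triangle-free graph to star partitions, and the bound $m \le \alpha(G)$ on the number of stars --- is the paper's strategy, and the 2-SAT encoding via $A_j = \{i : c_j \in S_i\}$ is a correct and rather elegant repackaging of the paper's auxiliary digraph $Q^*$ and its out-sections. (The observation you use implicitly, that $c_j \notin S_i$ forces \emph{all} of $L_j$ into $S_i$ because each leaf forms a $K_2 \in \mathbb{C}$ with $c_j$, is exactly what powers the paper's Lemma~\ref{lem:outsection}.) But the proposal halts precisely where the difficulty begins: the forcing step is asserted, not proved, and the specific intuition you offer --- that the implication digraph is ``in effect strongly connected'' and the girth hypothesis only excludes short forcing cycles --- is the opposite of what actually happens. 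The paper shows that the relevant sub-digraph $Q^*$ has \emph{small} strongly connected components (any large one would, via Lemma~\ref{lem:outsection} and the lower bound on star sizes, yield an independent set larger than $\alpha(G)$), and this smallness is what makes the whole construction possible.

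The missing idea is how to beat the union bound, and it is structural in a quite specific way that a one-shot expansion or spectral argument does not supply. The paper (i) discards into an exceptional set $J$ all stars of size $< 10^{10}\log n$; since $m \le \alpha(G) < 10 n^{0.9}\log n$, this gives $|J| < n^{0.91}$; (ii) uses the smallness of the SCCs of $Q^*$ to concatenate them, in topological order, into ten blocks $Q_1, \dots, Q_{10}$ each of size $\Theta(n^{0.9})$; (iii) proves a property (``Property $JQ$'', Lemma~\ref{lem:JQ}, all of Section~4) saying that whp \emph{every} choice of such $(J, Q_1, \dots, Q_{10})$ yields, in the associated ``private-neighbor'' digraph, an out-section whose leaves total at least $n^{0.95}$, contradicting $\alpha(G)$ small. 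The union bound runs over the $\sim n^{O(n^{0.91})}$ choices of $(J, Q_1, \dots, Q_{10})$ against a per-choice failure probability $e^{-\Omega(n)}$, which closes; a union bound over all star partitions would not, exactly as you noted. The proof of Property $JQ$ itself is an induction through the ten blocks driven by a partial-cover lemma for random bipartite graphs (Lemma~\ref{lem: bipartite}); that inductive growth of out-sections is the technical heart of the theorem, and it is what your sketch leaves entirely open.
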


\medskip
In this paper, we disprove the Normal Graph Conjecture. In fact, we prove the following
stronger result.

\begin{theorem} \label{thm:main}
There exists a graph $G$ of girth at least 8 that is not normal.
\end{theorem}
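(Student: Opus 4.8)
The plan is to take $G$ to be a binomial random graph with a small number of vertices deleted, and to disprove normality via a structural reduction specific to graphs with no short cycles. Fix a small $\varepsilon>0$ and let $G_0=G(n,p)$ with $p=n^{-6/7-\varepsilon}$; then the expected number of cycles of length at most $7$ in $G_0$ is $O(n^{1-7\varepsilon})=o(n)$, so with high probability there is a set $D$, $|D|=o(n)$, with $G:=G_0-D$ of girth at least $8$. Since girth at least $8$ implies triangle-freeness, such a $G$ has no induced $C_5$, $C_7$ or $\overline{C_7}$. The point that makes this more than a routine application of the deletion method is that normality is \emph{not} monotone under vertex deletion (for instance $C_7$ is not normal, whereas deleting any vertex from it yields a path, which is), so it does not suffice that $G_0$ be non-normal: one must show that with high probability $G_0-D$ is non-normal \emph{simultaneously} for every $D$ with $|D|=o(n)$.

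The first step is a combinatorial reduction, valid because girth at least $8$ — indeed, triangle-freeness — makes every clique an edge or a vertex, and a vertex used as a clique in a normal certificate would be forced to lie in every set of the independent covering, hence to be isolated. This yields: \emph{a triangle-free graph $H$ with no isolated vertex is normal if and only if there is an edge set $F$ covering $V(H)$ such that the independent sets of $H$ meeting every edge of $F$ cover $V(H)$.} Deleting edges from $F$ only enlarges its family of independent transversals, so $F$ may be taken minimal, i.e.\ a spanning star forest; and in such a forest every independent transversal contains, from each star, the centre or all the leaves, so it has size at least the number of stars, which therefore may be assumed at most $\alpha(H)$. Hence Theorem~\ref{thm:main} reduces to a statement purely about $G(n,p)$: with high probability, for every $D$ with $|D|=o(n)$ and every spanning star forest $F$ of $G_0-D$ with at most $\alpha(G_0)$ stars, some vertex of $G_0-D$ lies in no independent transversal of $F$.

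To establish this one works with high-probability properties of $G_0$ — the value $\alpha(G_0)=n^{6/7+o(1)}$, the number of independent sets of each size, and "spread"/expansion estimates such as the fact that sufficiently large independent sets dominate almost all of the graph — and runs a union bound over the candidate star forests, exploiting that their centre sets have size at most $\alpha$ and that only a single uncoverable vertex needs to be produced. The main obstacle is that, because $G$ has girth at least $8$, its independence number is necessarily a polynomial factor larger than its minimum degree (by the Moore bound, $n\gtrsim\delta^3$, hence $\alpha\ge n/(\delta+1)\gtrsim\delta^2$), so no crude argument of the form "independent sets are too small to be transversals of an edge cover" can work; the argument must instead exploit the fine structure of how independent sets intersect the stars of $F$ — the interplay between leaves forcing their whole leaf-class into a transversal, centres forcing a large independent class of centres, and the scarcity in $G(n,p)$ of independent sets compatible with these constraints. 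Once the relevant estimates are shown to hold simultaneously with high probability, combining them with the $o(n)$ bound on short cycles and the reduction above gives a graph of girth at least $8$ that is not normal, proving Theorem~\ref{thm:main}.
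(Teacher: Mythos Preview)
Your setup and the reduction to spanning star forests with at most $\alpha(G)$ stars are exactly what the paper does (their Claim~3.1), and you correctly flag the non-monotonicity of normality and the Moore-bound obstruction to any crude counting. But the proof stops precisely where the real work begins. The paper's key structural step is missing from your outline: from a star covering one builds a digraph $Q^*$ on the set of centres, with an arc $x_i\to x_j$ whenever some leaf of the star at $x_i$ is adjacent to $x_j$; then any \emph{out-section} $X$ (a set of centres all reachable from a common vertex) has the property that the union of leaves of stars centred in $X$ is independent (Lemma~3.2). This is the precise version of your ``leaves forcing their whole leaf-class into a transversal'' remark, and it is what converts the problem into showing that $Q^*$ must contain a large out-section. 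The paper then isolates a robust random-graph property $JQ$ --- for every small ``junk'' set $J$ and every choice of ten disjoint blocks $Q_1,\dots,Q_{10}$ of size $\Theta(n^{0.9})$, the private digraph over the $Q_i$'s has an out-section with $\ge n^{0.95}$ private neighbours --- and proves $\PP[JQ]=1-o(1)$ by an inductive cascade across the ten layers using a partial-cover lemma for random bipartite graphs.

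Your plan to ``run a union bound over the candidate star forests'' is not merely vague but would fail as stated: with $\alpha=n^{1-o(1)}$ centres and $\Delta\approx n^{0.1}$, the number of spanning star forests is $e^{\Omega(n\log n)}$, while the per-instance failure probabilities one can hope for here are $e^{-\Theta(n)}$. The paper sidesteps this by making the universal statement depend only on $J$ and the $Q_i$'s (at most $n^{O(n^{0.91})}$ choices), not on the full star forest; the link back to an arbitrary star forest is then purely deterministic via the out-section lemma and a topological ordering of the strong components of $Q^*$. Without the out-section mechanism and a $JQ$-type property, your outline does not contain a proof. (A minor point: the paper takes $p=n^{-9/10}$ rather than $n^{-6/7-\varepsilon}$; the exponent is chosen so that ten layers $Q_1,\dots,Q_{10}$ suffice for the cascade to reach out-sections of size $\Theta(n/d)$.)
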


Our proof is probabilistic, i.e., we construct a random graph of girth 8 which is not normal. 
In fact,
our proof method can easily be mimicked to show something stronger: there exist graphs of arbitrary girth $g$
which are not normal. 

The paper is organized as follows. In the next subsection, we introduce the well-known probabilistic
tools that are heavily used in the paper. In Section 2, we state and prove some standard properties of
the random graph $G_{n,p}$ most of which are folklore. In Section 3, using the results of Section 2 and
additional arguments we prove our main result, except a key lemma which is proved in Section 4. 

\subsection{Probabilistic tools}

To prove our main theorem, we need two basic and well-known probabilistic tools.

\begin{theorem}[Chernoff's Inequality, see \cite{prob-computing}]\label{chernoff}
Let $X_1, \cdots, X_n$ be independent Bernoulli (that is, $0/1$ valued) random variables where $\PP[X_i = 1] = p_i$. Let 
$X = \sum_{i = 1}^{n} X_i$ and let $\mu = \sum_{i = 1}^n p_i$ be the expectation of $X$. Then, for all $0 < \delta < 1$ we have:
\begin{eqnarray*}
    \PP[X \leq (1 - \delta)\mu] \leq e^{-\mu \delta^2 /2} \\
    \PP[X \geq (1 + \delta)\mu] \leq e^{-\mu \delta^2 /3}
\end{eqnarray*}
\end{theorem}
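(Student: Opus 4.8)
The plan is to use the standard exponential-moment (``Bernstein trick'') argument, treating the two tails symmetrically. For the upper tail, fix a parameter $t>0$. Since $x\mapsto e^{tx}$ is positive and increasing, Markov's inequality gives
\[
    \PP\big[X \geq (1+\delta)\mu\big] \;=\; \PP\big[e^{tX} \geq e^{t(1+\delta)\mu}\big] \;\leq\; e^{-t(1+\delta)\mu}\,\EE\big[e^{tX}\big].
\]
By independence of the $X_i$ the moment generating function factorizes, and since each $X_i$ is Bernoulli with $\PP[X_i=1]=p_i$,
\[
    \EE\big[e^{tX}\big] \;=\; \prod_{i=1}^n \EE\big[e^{tX_i}\big] \;=\; \prod_{i=1}^n \big(1 + p_i(e^t-1)\big) \;\leq\; \prod_{i=1}^n e^{\,p_i(e^t-1)} \;=\; e^{(e^t-1)\mu},
\]
using the elementary bound $1+x\leq e^x$ with $x = p_i(e^t-1)$. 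Combining, $\PP[X\geq(1+\delta)\mu] \leq \exp\!\big(\mu(e^t-1-t(1+\delta))\big)$ for every $t>0$.

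The next step is to optimize over $t$. The exponent $e^t-1-t(1+\delta)$ is convex in $t$ and minimized at $t=\ln(1+\delta)>0$; substituting gives the standard form
\[
    \PP\big[X \geq (1+\delta)\mu\big] \;\leq\; \left(\frac{e^{\delta}}{(1+\delta)^{1+\delta}}\right)^{\!\mu}.
\]
To reach the stated bound it then suffices to prove the one-variable inequality $e^{\delta}/(1+\delta)^{1+\delta}\leq e^{-\delta^2/3}$ for $0<\delta<1$, i.e.\ that $g(\delta):=(1+\delta)\ln(1+\delta)-\delta-\tfrac{\delta^2}{3}\geq 0$ on $(0,1)$. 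This follows from $g(0)=0$ together with $g'(\delta)=\ln(1+\delta)-\tfrac{2}{3}\delta\geq 0$ on $(0,1)$, which one sees from $g'(0)=0$ and a sign analysis of $g''(\delta)=\tfrac{1}{1+\delta}-\tfrac{2}{3}$ (positive then negative on $(0,1)$, while $g'(1)=\ln 2-\tfrac23>0$, so $g'$ stays nonnegative throughout).

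For the lower tail the argument is identical with $e^{-tX}$ in place of $e^{tX}$: for $t>0$,
\[
    \PP\big[X \leq (1-\delta)\mu\big] \;=\; \PP\big[e^{-tX} \geq e^{-t(1-\delta)\mu}\big] \;\leq\; e^{\,t(1-\delta)\mu}\prod_{i=1}^n\EE\big[e^{-tX_i}\big] \;\leq\; \exp\!\big(\mu(e^{-t}-1+t(1-\delta))\big),
\]
again by Markov's inequality, independence, and $1+x\leq e^x$. Minimizing the exponent at $t=-\ln(1-\delta)>0$ yields $\PP[X\leq(1-\delta)\mu]\leq \big(e^{-\delta}/(1-\delta)^{1-\delta}\big)^{\mu}$, so it remains to check $e^{-\delta}/(1-\delta)^{1-\delta}\leq e^{-\delta^2/2}$ on $(0,1)$, i.e.\ $(1-\delta)\ln(1-\delta)\geq -\delta+\tfrac{\delta^2}{2}$. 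This is immediate from the power series $(1-\delta)\ln(1-\delta) = -\delta + \sum_{k\geq 2}\tfrac{\delta^k}{k(k-1)} = -\delta + \tfrac{\delta^2}{2} + \tfrac{\delta^3}{6} + \cdots$, all of whose terms beyond the first are nonnegative for $\delta\in(0,1)$.

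The only mildly delicate part is the pair of scalar estimates $e^{\delta}/(1+\delta)^{1+\delta}\leq e^{-\delta^2/3}$ and $e^{-\delta}/(1-\delta)^{1-\delta}\leq e^{-\delta^2/2}$ on $(0,1)$; the asymmetric constants $\tfrac13$ and $\tfrac12$ are precisely what these elementary calculus bounds deliver (reflecting that $\ln(1-\delta)$ is ``more concave'' near $0$ than $\ln(1+\delta)$). Everything else — Markov's inequality, the factorization of the MGF via independence, the bound $1+x\leq e^x$, and the convex optimization in $t$ — is routine, so I do not anticipate any substantive obstacle.
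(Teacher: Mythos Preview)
Your argument is the standard exponential-moment proof of Chernoff's inequality and is correct as written; the calculus checks on $g(\delta)=(1+\delta)\ln(1+\delta)-\delta-\tfrac{\delta^2}{3}$ and on the power series for $(1-\delta)\ln(1-\delta)$ both go through as you describe. Note, however, that the paper does not actually prove this statement at all: it is quoted as a black-box tool with a reference to \cite{prob-computing}, so there is no proof in the paper to compare against.
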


\begin{theorem}[Markov's inequality]
    If $X$ is any non-negative discrete random variable and $a > 0$, then
    \begin{eqnarray*}
        \PP[X \geq a] \leq \frac{\EE[X]}{a}
    \end{eqnarray*}
\end{theorem}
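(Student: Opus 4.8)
The plan is to deduce the bound from a pointwise inequality between two random variables followed by an application of the monotonicity of expectation. First I would introduce the indicator random variable $I$ of the event $\{X \geq a\}$, so that $I = 1$ whenever $X \geq a$ and $I = 0$ otherwise. The decisive observation is that $aI \leq X$ holds for every outcome: on the event $\{X \geq a\}$ one has $aI = a \leq X$, and on the complementary event $\{X < a\}$ one has $aI = 0 \leq X$ because $X$ is non-negative.

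Next I would take expectations on both sides of $aI \leq X$. Since the expectation of discrete random variables respects the order $\leq$ (termwise in the defining sum), this yields $a\,\EE[I] \leq \EE[X]$. Because $I$ is $0/1$-valued, $\EE[I] = \PP[I = 1] = \PP[X \geq a]$, so $a\,\PP[X \geq a] \leq \EE[X]$, and dividing through by $a > 0$ gives the claim. An equivalent and equally short route is to argue straight from the definition: writing $\EE[X] = \sum_{x} x\,\PP[X = x]$, where the sum ranges over the non-negative values $x$ attained by $X$, one drops the (non-negative) terms with $x < a$ to obtain $\EE[X] \geq \sum_{x \geq a} x\,\PP[X = x] \geq a \sum_{x \geq a} \PP[X = x] = a\,\PP[X \geq a]$.

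There is essentially no obstacle in this argument; it is a one-line consequence of monotonicity of expectation once the inequality $aI \leq X$ is spotted. The only point deserving a passing remark is the degenerate case $\EE[X] = +\infty$ (admissible for a general non-negative discrete variable), in which the asserted bound holds trivially, so one may assume $\EE[X] < \infty$ throughout if desired. Note where each hypothesis enters: non-negativity of $X$ is used exactly once, to bound the outcomes in $\{X < a\}$ from below by $0$, and the hypothesis $a > 0$ is used only in the final division.
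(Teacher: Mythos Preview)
Your argument is correct and is the standard textbook proof of Markov's inequality. The paper itself does not supply a proof of this statement: it is quoted in Section~1.1 as one of two well-known probabilistic tools and used without justification, so there is no in-paper argument to compare against.
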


%

\section{Random graph properties}

Let $G_{n,p}$ denote the random graph on $n$ vertices in which every edge is randomly and independently 
chosen with probability $p$.

Consider the random graph $G:=G_{n,p}$ with $p = n^{-9/10}$. Denote by
$d := np = n^{1/10}$ and let $X_7$ be the number of cycles in $G$ of length at most $7$. 
By $\alpha(G)$ we denote the size of the
largest independent set in $G$. 
In the sequel, we always
assume that $n$ is sufficiently large. 

\begin{lemma}\label{lem:properties}
    The following properties hold for the graph $G$.

    \begin{itemize}
        \item [(a)] $\PP[X_7 > 4 n^{7/10}] < 1/2$. \\
        \item [(b)] Let $c \geq 10$ be a fixed constant. Then
        $\PP[\alpha(G) \geq cn^{9/10} \log n] 
        \leq n^{- \frac{c^2 n^{0.9} \log n}{3}}$. \\
        
%
        \item [(c)] Let $D$ be the event that $G$ has a vertex of degree greater than $2d$.
        Then $\PP[D] \leq e^{-n^{0.1}/10}$.

    \end{itemize}

\end{lemma}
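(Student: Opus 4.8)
The plan is to prove each of the three parts by a standard first-moment or concentration argument, tailored to the parameters $p = n^{-9/10}$ and $d = np = n^{1/10}$.

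\textbf{Part (a).} I would bound $\EE[X_7]$ directly. For each $3 \le k \le 7$, the expected number of cycles of length exactly $k$ is at most $n^k p^k = (np)^k = d^k$ (more precisely $\tfrac{1}{2k}n(n-1)\cdots(n-k+1)p^k \le n^k p^k$). Since $d = n^{1/10}$, the dominant term is $k = 7$, giving $\EE[X_7] \le \sum_{k=3}^{7} d^k \le 2 d^7 = 2 n^{7/10}$ for $n$ large. By Markov's inequality, $\PP[X_7 > 4 n^{7/10}] \le \PP[X_7 \ge 2\EE[X_7]] \le 1/2$ (with a little slack, since $\EE[X_7] \le 2n^{7/10}$ means $4n^{7/10} \ge 2\EE[X_7]$). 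This is the easy part.

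\textbf{Part (b).} This is a first-moment bound on the existence of a large independent set. Let $a = \lceil c n^{9/10} \log n \rceil$. The probability that a fixed set of $a$ vertices is independent is $(1-p)^{\binom{a}{2}}$, so by the union bound $\PP[\alpha(G) \ge a] \le \binom{n}{a}(1-p)^{\binom{a}{2}} \le n^a e^{-p\binom{a}{2}}$. Using $\binom{a}{2} \ge a^2/4$ (valid once $a \ge 2$) and $p a = c n^{9/10} \log n \cdot n^{-9/10} = c \log n$, we get $p \binom{a}{2} \ge \tfrac{1}{4} (pa) \cdot a = \tfrac{c \log n}{4} \cdot a$. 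Hence the bound is at most $n^a e^{-(c\log n / 4) a} = n^a \cdot n^{-ca/4} = n^{-a(c/4 - 1)}$. Since $c \ge 10$, $c/4 - 1 \ge 3/2 > c/3$... here I need to be a touch careful to land exactly on the stated exponent $-\tfrac{c^2 n^{0.9}\log n}{3}$; the cleanest route is to keep the full $\binom{a}{2} \sim a^2/2$ and write $p\binom{a}{2} \ge p a^2/3$ for $n$ large (since $\binom{a}{2}/a^2 \to 1/2 > 1/3$), giving $\PP \le n^a e^{-pa^2/3} = n^a \cdot n^{-c^2 n^{0.9}\log n /3}$, and then absorb the $n^a = n^{O(n^{0.9}\log n)}$ factor by noting $c^2/3$ beats the competition — or, most safely, just use $c \ge 10$ so that $c^2/3 \ge 33 \gg c$ and the $n^a$ factor is swallowed. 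I would present it with enough slack that the stated inequality holds for large $n$.

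\textbf{Part (c).} The degree of a fixed vertex $v$ is $\mathrm{Bin}(n-1, p)$ with mean $(n-1)p \le d = n^{0.1}$. Apply Chernoff's upper-tail bound (Theorem~\ref{chernoff}) with $\delta = 1$: $\PP[\deg(v) \ge 2d] \le \PP[\deg(v) \ge (1+1)\mu] \le e^{-\mu/3}$ where $\mu = (n-1)p \ge d/2$ for $n$ large, giving $\PP[\deg(v) \ge 2d] \le e^{-d/6}$. Union over the $n$ vertices: $\PP[D] \le n e^{-d/6} = e^{\log n - n^{0.1}/6} \le e^{-n^{0.1}/10}$ for $n$ sufficiently large, since $n^{0.1}/6 - \log n \ge n^{0.1}/10$ eventually. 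I expect no real obstacle here.

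The only mildly delicate point across the three parts is matching the precise constants in (b) — in particular getting the exponent to read exactly $-\tfrac{c^2 n^{0.9}\log n}{3}$ rather than something merely of the same order — but this is a routine matter of being slightly generous with the $\binom{a}{2} \ge a^2/3$ estimate and invoking $n$ large; there is no conceptual difficulty in any of (a), (b), (c).
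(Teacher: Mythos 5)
Your approach to all three parts is the same as the paper's (first moment plus Markov for (a), union bound over vertex sets for (b), Chernoff plus union bound for (c)), and parts (a) and (c) go through essentially as you write them. However, there is a genuine arithmetic gap in (b): neither of the two estimates you propose for $\binom{a}{2}$ is good enough to recover the stated exponent $-\tfrac{c^2 n^{0.9}\log n}{3}$. Writing $a = cn^{0.9}\log n$, the bound $n^a e^{-p\binom{a}{2}}$ has log equal to $a\log n - p\binom{a}{2}$. With $\binom{a}{2}\ge a^2/4$ this is $\ge n^{0.9}(\log n)^2\bigl(c - c^2/4\bigr)$, and with $\binom{a}{2}\ge a^2/3$ it is $\ge n^{0.9}(\log n)^2\bigl(c - c^2/3\bigr)$; in both cases the $+c$ term coming from $n^a$ means the resulting exponent is strictly weaker than $-c^2/3\cdot n^{0.9}(\log n)^2$ (indeed $c - c^2/3 \le -c^2/3$ would force $c\le 0$). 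So ``the $n^a$ factor is swallowed'' is true in the sense that the bound is exponentially small, but false in the sense of matching the stated constant. The paper avoids this by keeping the sharp $\binom{x}{2} = \tfrac{x(x-1)}{2}$, giving $\bigl(ne^{-p(x-1)/2}\bigr)^x$; the log of this is $\approx n^{0.9}(\log n)^2\bigl(c - c^2/2\bigr)$, and $c - c^2/2 \le -c^2/3$ holds exactly when $c\ge 6$, which is where the hypothesis $c\ge 10$ is used. In short: you must use $\binom{a}{2}\sim a^2/2$ (or equivalently $(a-1)/2$ per vertex), not a lossier $a^2/3$ or $a^2/4$, and then the surplus in $c - c^2/2 + c^2/3 = c - c^2/6$ is what absorbs the $n^a$ factor when $c\ge 10$.
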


\begin{proof}

    (a) Note that by linearity of expectation,

    $$\EE[X_7] \leq \sum_{l=3}^{7} \binom{n}{l} (l-1)! p^l \leq  \sum_{l=3}^{7} (np)^ l \leq 2 n^{7/10}.$$
    The result now follows by Markov's inequality. 

	\bigskip

    (b) 
    is well-known and can be deduced from, for 
    example, Frieze \cite{Frieze90}. We include the proof for completeness.
    By the union bound, we have
     \begin{eqnarray*}
        \PP[\alpha(G(n,p)) \geq x] &\leq& \binom{n}{x} (1-p)^{\binom{x}{2}} \\
                                   &\leq& n^x (e^{-p(x-1)/2})^{x} \leq (ne^{-n^{-0.9}(x-1)/2})^{x}
    \end{eqnarray*}

	Now, setting $x:= cn^{0.9}\log n$ yields the result.
    
%

	\bigskip

%
%
    (c) Clearly, $\PP[D] \leq n \PP[\deg(v) > 2d]$, where $v$ is some fixed vertex.
    By Chernoff's inequality $\PP[\deg(v) > 2d] \leq e^{-n^{0.1}/3}$. The claim now follows.

\end{proof}
%
%
%
%
%


Let $G$ be a bipartite graph with $m$ edges on vertex bipartition
$(A,B)$. We denote by $d$ its average degree
in $A$, that is $d=m/|A|$ and by $e(X,Y)$ the number of edges
between the set $X$ and $Y$ for any $X \subseteq A$, $Y \subseteq B$. 
A {\it partial cover} of $G$ is a 
set of pairs $(x_i,Y_i)$ where the $x_i$'s are 
distinct vertices of $A$, the $Y_i$'s are disjoint sets 
of $B$, $x_i$ is a neighbor of all vertices of $Y_i$,
the size of each $Y_i$ is $\lceil d/3\rceil$ and finally 
the union of $Y_i$'s has size at least $|B|/3$.

\begin{lemma} \label{lem: bipartite}
Let $G$ be a random bipartite graph on vertex bipartition
$(A,B)$, where each possible edge appears with some probability $p$, independently. If 
$\min\{|A|, |B|\} > 10^{100}p^{-1}$, 
then $G$ has $e(A,B) \in [0.99p|A||B|, 1.01p|A||B|]$ and a partial cover with probability at least $1-e^{-cp|A||B|}$,
where $c > 0$ is an absolute constant.
\end{lemma}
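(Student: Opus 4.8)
The plan is to first control the edge count, then greedily build the partial cover, handling the "most vertices of $A$ have close-to-average degree" issue by a concentration argument over subsets of $B$. The edge-count statement is immediate: $e(A,B)$ is a sum of $|A||B|$ i.i.d.\ Bernoulli($p$) variables with mean $p|A||B| \geq 10^{100}|B| \geq 10^{100}$, so by Chernoff's inequality (Theorem~\ref{chernoff}) with $\delta = 0.01$ the probability that $e(A,B) \notin [0.99p|A||B|, 1.01p|A||B|]$ is at most $2e^{-p|A||B|\cdot (0.01)^2/3}$, which is absorbed into the claimed bound for a suitable $c$. Call this event $\mathcal{E}_1$ and work on its complement; note that on $\mathcal{E}_1$ the true average degree in $A$ satisfies $d = e(A,B)/|A| \in [0.99p|B|, 1.01p|B|]$.

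Next I would construct the cover greedily. Process the vertices of $A$ in arbitrary order; maintain the set $U \subseteq B$ of vertices not yet used in any $Y_i$. When we reach a vertex $x \in A$, if $x$ has at least $\lceil d/3 \rceil$ neighbours inside the current $U$, pick any $\lceil d/3 \rceil$ of them to form $Y_x$, add the pair $(x, Y_x)$, and remove $Y_x$ from $U$; otherwise skip $x$. Stop once $|B \setminus U| \geq |B|/3$. The key point is that this process succeeds unless it gets stuck with $|B \setminus U| < |B|/3$ — i.e.\ with $|U| > 2|B|/3$ — while every remaining (unprocessed or skipped) vertex of $A$ has fewer than $\lceil d/3\rceil \approx pd/3$... more precisely fewer than $d/3+1$ neighbours in $U$. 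So the bad event is contained in: \emph{there exists a set $U \subseteq B$ with $|U| \geq 2|B|/3$ such that at least, say, $|A|/2$ vertices of $A$ have fewer than $d/3+1$ neighbours in $U$} (we only need to fail on enough of $A$ to be unable to cover $|B|/3$ of $B$, and since each successful step covers $\lceil d/3\rceil$ vertices while $d \leq 1.01p|B|$, covering $|B|/3$ of $B$ takes at most roughly $|B|/(3\lceil d/3\rceil) \le |A|\cdot 1.01/0.99 \cdot \tfrac13 \cdot \tfrac{1}{?}$ — in any case a small constant fraction of $|A|$, so failing means a large constant fraction of $A$ is deficient on some fixed large $U$).

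For a \emph{fixed} $U$ with $|U| \geq 2|B|/3$ and a \emph{fixed} vertex $x \in A$, the number of neighbours of $x$ in $U$ is $\mathrm{Bin}(|U|, p)$ with mean $p|U| \geq \tfrac{2}{3}p|B| \geq \tfrac{2}{3}\cdot\tfrac{1}{1.01}d > 0.6 d$, so by Chernoff the probability it is below $d/3+1 < 0.4\cdot 0.6 d / 0.6 \cdot$ ... below $(1-\delta)\cdot 0.6d$ with $\delta$ an absolute constant bounded away from $0$, hence at most $e^{-c_0 d}$ for an absolute constant $c_0 > 0$. These events are independent over $x \in A$ (disjoint edge sets), so the probability that at least $|A|/2$ of them occur is at most $\binom{|A|}{|A|/2} e^{-c_0 d |A|/2} \leq 2^{|A|}e^{-c_0 d|A|/2}$; since $d \geq 0.99p|B| \geq 0.99\cdot 10^{100}$ this is at most $e^{-c_1 d|A|}$ for an absolute constant $c_1>0$. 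Finally union-bound over all choices of $U$: there are at most $2^{|B|}$ of them, giving total failure probability at most $2^{|B|}e^{-c_1 d|A|} = 2^{|B|}e^{-c_1 e(A,B)}$, and since $e(A,B) \geq 0.99 p|A||B| \geq 0.99\cdot 10^{100}|B|$, the factor $2^{|B|} = e^{|B|\ln 2}$ is swamped and the whole thing is at most $e^{-c_2 p|A||B|}$. Combining with $\mathcal E_1$ and shrinking the constant finishes the proof.

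The main obstacle I anticipate is the union bound over all subsets $U$ of $B$: it costs a factor $2^{|B|}$, and this only gets beaten because the exponent $e(A,B) = \Theta(p|A||B|)$ dwarfs $|B|$ thanks to the hypothesis $|A| > 10^{100}p^{-1}$ (so $pd|A| \gtrsim p^2|A|^2 \gg |B|$ is \emph{not} what we have — rather $d|A| = e(A,B) \ge 10^{100}|B|$ directly). Getting the bookkeeping of constants right so that $2^{|B|}$ is genuinely absorbed — and making sure "fail to cover $|B|/3$" really does force a constant fraction of $A$ to be deficient on a common large residual set $U$ — is where the care is needed; everything else is routine Chernoff estimation.
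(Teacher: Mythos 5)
Your argument is correct, but it takes a genuinely different route from the paper's. You directly union-bound the failure probability of the greedy process over all possible residual sets $U\subseteq B$ with $|U|\ge 2|B|/3$, paying a $2^{|B|}$ factor that is absorbed because the concentration exponent is $\Theta(p|A||B|)$ and the hypothesis $p|A|>10^{100}$ makes this dwarf $|B|$. The paper instead separates the randomness from the combinatorics: it uses Chernoff once to show that with probability $\ge 1-e^{-cp|A||B|}$ the graph satisfies three concentration properties (at least $0.99|A|$ vertices of $A$ have degree in $[0.99p|B|,1.01p|B|]$, likewise for $B$, and $m=e(A,B)\in[0.99p|A||B|,1.01p|A||B|]$), and then gives a purely \emph{deterministic} counting argument that any graph with those properties has a partial cover: if the greedy process stalls with $|Y|<|B|/3$, the set $X$ of chosen centers is negligible (of size $<4p^{-1}\ll |A|$), so almost all good edges go from $A'\setminus X$ to $B'$, forcing $e(A'\setminus X, Y)$ to be simultaneously large (since each vertex of $A'\setminus X$ has few remaining neighbors outside $Y$) and small (since $|Y|<|B|/3$ and each vertex of $Y$ has degree $\le 1.01p|A|$), a contradiction. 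The paper's version avoids the $2^{|B|}$ union bound entirely and is somewhat cleaner; your version trades that for heavier bookkeeping but arrives at the same place, and both exploit $p|A|>10^{100}$ in essentially the same way (it makes $p^{-1}$ negligible compared to $|A|$, and makes $p|A||B|$ dwarf $|A|+|B|$).

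One point you should tighten: the deficiency threshold $d/3+1$ (with $d=m/|A|$) is itself random, so ``these events are independent over $x\in A$'' is not literally true as stated. The fix, which you in effect use but do not spell out, is to work on $\mathcal E_1^c$ where $d\le 1.01p|B|$, replace the threshold by the deterministic value $0.34\,p|B|$ (using $p|B|>10^{100}$ to absorb the $+1$), and apply Chernoff and independence to the events $\{\deg_U(x)<0.34\,p|B|\}$, which do depend on pairwise-disjoint edge sets. With that substitution the union-bound calculation goes through as you describe.
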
    

\begin{proof} 
Let $A'$ be the set of vertices of $A$ with degree in $[0.99p|B|,1.01p|B|]$
in $B$
and $B'$ be the set of vertices of $B$ with degree in $[0.99p|A|,1.01p|A|]$
in $A$.
By Chernoff's inequality, there exists a constant $c > 0$, such that the probability 
that (i) $|A'|<0.99|A|$ or, (ii) $|B'|<0.99|B|$, or (iii) $m:=e(A,B) \notin [0.99p|A||B|, 1.01p|A||B|]$
is at most $e^{-cp|A||B|}$. Indeed, note that
probability of (i) is at most 
$$\binom{|A|}{0.01|A|} (2e^{-(0.01)^2 p|B|/3 })^{0.01|A|} < 2^{|A|} e^{-(0.01)^4p|A||B|} <
e^{-c_1 p|A||B|}$$ for some constant $c_1 > 0$ (here we used the fact that 
$10^{100}p^{-1} < \min\{|A|, |B|\}$). Similarly the probability of (ii)
is at most $2^{|B|} e^{-(0.01)^4p|A||B|} < e^{-c_2 p|A||B|}$ for some constant $c_2 > 0$
(here again we used the fact that $\min\{|A|,|B|\} > 10^{100}p^{-1}$). The probability of (iii)
is clear.

Now, we claim that if $G$ satisfies 
$|A'|\geq 0.99|A|$, $|B'|\geq 0.99|B|$ and  $m \in [0.99p|A||B|, 1.01p|A||B|]$, then it 
has a partial cover. Observe first that at least $3m/4$ edges 
of $G$ must be between $A'$ and $B'$ (call these {\it good edges}). Now greedily pick pairs $(x_i,Y_i)$
where $x_i\in A'$ and $Y_i\subseteq B'\cap N(x_i)$ has size exactly
$\lceil m/3|A|\rceil$ in order to construct a partial cover. If the process stops with 
$Y:=Y_1\cup \dots \cup Y_k$ of size at least $|B|/3$, we have our
partial cover. If not, denote by $X$ the set 
$\{x_1,\dots ,x_k\}$, and note that this implies that every vertex in $A'\setminus X$
has degree less than $\lceil m/(3|A|)\rceil$ in $B'\setminus Y$. Note that 
the size of $X$ is negligible compared to the size of $A'$.
Indeed, $|X|< |B|/\lceil m/(3|A|)\rceil < 4p^{-1} < |A'|/10^{10}$. 
Hence the number of good edges incident to 
$X$ is negligible compared to the number of good edges. In particular,
at least $2.99m/4$ good edges are incident to $A'\setminus X$. However, since 
every vertex in $A'\setminus X$ has degree at most $\lceil m/(3|A|)\rceil$
in $B'\setminus Y$, $e(A' \setminus X, Y) > 2.99m/4-\lceil m/(3|A|)\rceil(|A'|-|X|) > 2.99m/4 - m/3$. 
Now, since $|Y| < |B|/3$, and every vertex in $Y$ has degree at most $1.01p|A|$, it follows that
$e(A' \setminus X, Y) < 1.01 p |A| |B|/3 < 1.01 m / (3 \cdot 0.99)$. This implies that   
$2.99m/4 - m/3 < 1.01 m / (3 \cdot 0.99)$, a contradiction.

\end{proof}

\section{Proof of Theorem~\ref{thm:main}}

In this section we prove our main result. We say that a graph $G$ admits
a \emph{star covering} if there exist two coverings, $\mathbb{C}$ and $\mathbb{S}$, of $V(G)$ such that: 

\begin{itemize}
    \item [(a)] every member of $\mathbb{C}$ induces a clique $K_2$ or $K_1$ in $G$, where 
  no $K_1$ is included in some $K_2$.
    \item [(b)] the graph on $V(G)$ consisting of the edges of $\mathbb{C}$, denoted by $E[\mathbb{C}]$, is 
    a spanning vertex-disjoint union of stars.
    \item [(c)] every member of $\mathbb{S}$ induces an independent set in $G$.
    \item [(d)] $C \cap S \neq \emptyset$ for every $C \in \mathbb{C}$ and $S \in \mathbb{S}$.
\end{itemize}

Every graph $G$ admitting a star covering is normal, and the converse holds 
for triangle-free graphs: 

\begin{claim} \label{claim: star-covering alpha}
    If $G$ is a normal triangle-free graph, then $G$ admits a star covering $(\mathbb{C}, \mathbb{S})$
    where $E[\mathbb{C}]$ contains at most $\alpha(G)$
    stars.
\end{claim}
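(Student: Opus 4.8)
The goal is to take a normal triangle-free graph $G$ with coverings $(\mathbb{C}_0,\mathbb{S}_0)$ and massage the clique cover into one that satisfies the extra "star" structure while not increasing the number of edges used beyond $\alpha(G)$. Since $G$ is triangle-free, every clique in $\mathbb{C}_0$ is a $K_1$ or a $K_2$, so condition (a)--(d) is almost free; the only real work is to arrange that the chosen edges form a \emph{vertex-disjoint union of stars} and that the number of such stars is at most $\alpha(G)$.

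\medskip

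\textbf{Step 1: reduce to a spanning set of edges and singletons.} First I would throw away redundant members of $\mathbb{C}_0$: keep an inclusion-minimal subfamily still covering $V(G)$. After this every $K_1$ we keep is a genuinely isolated vertex in the graph $E[\mathbb{C}]$ of chosen edges, so condition (a) and the "no $K_1$ inside a $K_2$" clause hold. The chosen edges form some spanning subgraph $H = E[\mathbb{C}]$ of $G$; we still need $H$ to be a disjoint union of stars.

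\medskip

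\textbf{Step 2: extract a spanning star forest from the chosen edge set.} The key observation is that the intersection condition (d) only requires each clique to meet each $S \in \mathbb{S}$; replacing a $K_2$ by one of its endpoints (turned into a $K_1$) can only make (d) harder, but replacing it is fine as long as that endpoint still meets every $S$. So I would argue: take a spanning forest of $H$, then in each tree pick a proper $2$-colouring and contract one colour class appropriately — more cleanly, decompose each connected component of $H$ into a union of stars by repeatedly pulling out a maximal star centered at a highest-degree vertex and deleting its edges. Each star keeps exactly one "center" meeting all the relevant $S$'s; the leaves that get orphaned become $K_1$'s. I must check that each orphaned leaf, as a $K_1$, still intersects every $S\in\mathbb{S}_0$: this is where I need that $\{v\}$ already appeared (or can be added) — actually the cleanest route is to also enlarge $\mathbb{S}$: whenever the decomposition forces a vertex $v$ to become a $K_1$, I note $\{v\}$ is trivially independent, so I may \emph{add} $\{v\}$ to $\mathbb{S}$ if needed; but then every clique must meet $\{v\}$, i.e. $v$ must lie in every clique — false. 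So instead: for a $K_1 = \{v\}$ in $\mathbb{C}$, condition (d) demands $v \in S$ for every $S\in\mathbb{S}$, which is only sustainable if $\mathbb{S}$ is small. The correct move is therefore to be careful about \emph{which} vertices become singletons, or equivalently to re-choose $\mathbb{S}$ as well.

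\medskip

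\textbf{Step 3: bound the number of stars by $\alpha(G)$ — the main obstacle.} This is the crux. The centers of the stars, together with the isolated $K_1$ vertices, should form (or be refinable to) an independent set, because two adjacent centers would give an edge we could have used to merge two stars into one, reducing the count. Concretely: given the star forest, consider the set $T$ of star centers and isolated vertices; if two elements of $T$ are adjacent in $G$, that edge lets us re-root and merge, strictly decreasing the number of stars, so a star covering with a \emph{minimum} number of stars has $T$ independent, hence $|T|\le \alpha(G)$, and the number of stars is exactly $|T|$. The delicate part is maintaining conditions (c)--(d) through these merges: when I merge two stars I am changing $\mathbb{C}$, so I must re-verify that every new clique still meets every $S$. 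I expect the honest argument picks, among all star coverings of $G$ (which exist because $G$ is normal and triangle-free, by Step 1--2), one minimizing $|E[\mathbb{C}]|$ = number of stars, and then derives independence of the center set directly from minimality; managing the bookkeeping of $\mathbb{S}$ and verifying (d) survives each local move is the step I expect to be fiddly and is where I would spend the most care.
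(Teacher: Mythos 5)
Your Step 1 is fine, but Steps 2 and 3 both contain genuine gaps, and the paper's proof sidesteps exactly the two obstacles you ran into.

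\emph{On Step 2.} Greedily peeling maximal stars off a component of $H$ orphans vertices, and, as you correctly observe, an orphaned vertex $v$ re-added as a $K_1$ forces $v\in S$ for every $S\in\mathbb{S}$ by condition (d) --- which is generally impossible. You notice this but leave it unresolved. The paper's reduction never creates orphans: starting from the (triangle-free, hence $K_1/K_2$) normal clique cover $\mathbb{C}'$, it repeatedly deletes an edge $uv\in E[\mathbb{C}']$ \emph{only when both} $d_{E[\mathbb{C}']}(u)\geq 2$ and $d_{E[\mathbb{C}']}(v)\geq 2$. After such a deletion both endpoints remain covered by other members of $\mathbb{C}'$, so the intersecting property with $\mathbb{S}'$ is preserved for free, and when no such edge remains every component of $E[\mathbb{C}]$ has at most one vertex of degree $\geq 2$, i.e., is a star. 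No new singletons are introduced and $\mathbb{S}$ is never touched.

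\emph{On Step 3.} Your bound hinges on the claim that in a minimum star covering the set $T$ of centers is independent, justified by ``adjacent centers let us merge.'' That merge is not established: if $x_i$ and $x_j$ are adjacent centers of two \emph{non-trivial} stars, adding $x_ix_j$ and re-rooting orphans all leaves of one of them, which is precisely the Step~2 problem again. The paper does not need center independence at all. Fix any $S\in\mathbb{S}$ and any star $S_i$ with center $x_i$: if $S_i$ is trivial, $\{x_i\}\in\mathbb{C}$ forces $x_i\in S$; if $S_i$ has a leaf $\ell$ with $x_i\ell\in\mathbb{C}$, then $S$ must contain $x_i$ or $\ell$. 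Either way $S$ contains a vertex of $S_i$. Since the stars are vertex-disjoint, the number of stars $k$ satisfies $k\leq|S|\leq\alpha(G)$. This is a direct counting argument against a single $S$, with no minimality, no merging, and no independence of centers required.
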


\begin{proof}
    Let $(\mathbb{C}', \mathbb{S}')$ be a normal covering
    of $G$. Since $G$ is triangle-free, all cliques in $\mathbb{C}'$ are $K_2$'s
    or $K_1$'s. The cliques $K_1$ included in some $K_2$ can be deleted from $\mathbb{C}'$.
    All that remains to show is that we can reduce to cliques inducing vertex-disjoint stars.
    Indeed, suppose that $E[\mathbb{C}']$ contains two adjacent vertices $u,v$ 
    with $d_{E[\mathbb{C}']}(u) \geq 2$ and $d_{E[\mathbb{C}']}(v) \geq 2$.
    Deleting the edge $uv$ from $\mathbb{C}'$ gives another covering 
    (since $u$ and $v$ are also covered by other edges)
    that is still intersecting with $\mathbb{S}'$.
    Repeating this, we obtain a star covering $(\mathbb{C}, \mathbb{S})$ of $G$.
    
    Now, we show that the number of stars in $E[\mathbb{C}]$ is at most $\alpha(G)$. Indeed, 
    let $x_1,..., x_k$ be the centers of the stars (some centers $x_i$ may be trivial stars) 
    in $E[\mathbb{C}]$, and let $S \in \mathbb{S}$
    be any independent set. 
    Then for each $x_i$, $S$ must contain either $x_i$ or an adjacent 
    neighbor of $x_i$ in $\mathbb{C}$. Since the stars are disjoint, 
    it follows that $k \leq |S| \leq \alpha(G)$.
\end{proof}

Let $G=(V,E)$ be a graph. A {\it star system} $(Q,\cal S)$ of $G$ is a spanning set of vertex 
disjoint stars where $\cal S$ is the set of stars, and $Q$ is the set of centers of 
the stars of $\cal S$. Therefore every $x_i\in Q$ is the center of some star $S_i$ of 
$\cal S$. Moreover, the union of vertices of the $S_i$'s is equal to $V$.
Note that some stars can be trivial, i.e. simply consisting of their center.
To every star system $(Q,\cal S)$, we associate a directed graph $Q^*$ on vertex
set $Q$ by letting $x_i\rightarrow x_j$ whenever a leaf 
of $S_i$ is adjacent to $x_j$. 
Of particular interest here is the following notion
of {\it out-section}: A subset $X$ of $Q$ is an out-section 
if there exists $v$ in $Q$ such that for each $x\in X$, there exists a directed path 
in $Q^*$ from $v$ to $x$. 

Observe that to every star-covering we can associate 
the star-system $E[\mathbb{C}]$.

 \begin{lemma} \label{lem:outsection}
Let $G$ be a normal triangle-free graph with a star covering $(\mathbb{C}, \mathbb{S})$.
We denote by $(Q,\cal S)$ its associated star-system. 
Assume that $X$ is an out-section of $Q^*$. Then the set of leaves of the stars with centers 
in $X$ form an independent set of $G$.
\end{lemma}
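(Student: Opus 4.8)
The plan is to use two features of a star covering $(\mathbb{C},\mathbb{S})$: by parts (a)--(b) of the definition the members of $\mathbb{C}$ that meet a given vertex are completely determined by the star-system $(Q,\mathcal S)$, and by part (d) every member of $\mathbb{C}$ meets every member of $\mathbb{S}$. The first thing I would establish is a \emph{dichotomy}: for an arbitrary $S\in\mathbb{S}$ and an arbitrary centre $x_i\in Q$ with star $S_i$, if $x_i\notin S$ then $S_i$ is non-trivial and \emph{every} leaf of $S_i$ belongs to $S$. Indeed, if $S_i$ were trivial then $\{x_i\}$ is the unique member of $\mathbb{C}$ covering the isolated vertex $x_i$, so $S\cap\{x_i\}\neq\emptyset$ would force $x_i\in S$, a contradiction; and if $S_i$ is non-trivial and $\ell$ is a leaf of $S_i$, then $\{x_i,\ell\}$ is the unique member of $\mathbb{C}$ covering $\ell$ (by part (a) no singleton $\{\ell\}$ can lie in $\mathbb{C}$), so $S$ meets $\{x_i,\ell\}$ and $x_i\notin S$ forces $\ell\in S$.

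Next I would dispose of a degenerate case and then propagate. Let $v\in Q$ be the vertex witnessing that $X$ is an out-section. If the star $S_v$ is trivial, then $v$ has no leaves, hence no out-neighbour in $Q^*$, so the only element of $Q$ reachable from $v$ is $v$ itself; thus $X\subseteq\{v\}$ and the set of leaves of stars with centres in $X$ is empty, which is vacuously independent. Otherwise $S_v$ has a leaf $\ell$; since $\mathbb{S}$ covers $V(G)$, there is some $S\in\mathbb{S}$ with $\ell\in S$, and since $\ell$ is adjacent to $v$ and $S$ is independent, $v\notin S$. Fix such an $S$.

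The heart of the argument is then an induction on the length $t$ of a directed walk $v=z_0\to z_1\to\cdots\to z_t$ in $Q^*$, with the claim that $z_t\notin S$ and all leaves of $S_{z_t}$ lie in $S$. For $t=0$ this is the dichotomy applied to $v$ together with $v\notin S$. For the step, an arc $z_t\to z_{t+1}$ means some leaf $\ell'$ of $S_{z_t}$ is adjacent in $G$ to $z_{t+1}$; by the inductive hypothesis $\ell'\in S$, so independence of $S$ gives $z_{t+1}\notin S$, and the dichotomy then puts all leaves of $S_{z_{t+1}}$ into $S$. Since by definition every $x\in X$ is reached from $v$ by a directed path in $Q^*$, all leaves of every star $S_x$ with $x\in X$ lie in the single independent set $S$; hence their union is independent, which is exactly the assertion.

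I expect the only genuine subtlety to be the production of the independent set $S$: the whole propagation needs one member of $\mathbb{S}$ that avoids the root $v$, and getting it relies on $\mathbb{S}$ being a \emph{covering} (the leaves of $S_v$ must be covered, and a leaf cannot share an independent set with its neighbour $v$), together with the separate, trivial treatment of the case where $v$ is an isolated centre. Once $S$ is in hand, the remainder is a routine one-line induction along $Q^*$, and triangle-freeness of $G$ is not needed for this lemma beyond what is already built into the definition of a star covering.
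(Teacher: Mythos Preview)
Your proposal is correct and follows essentially the same route as the paper: choose $S\in\mathbb{S}$ containing a leaf of the root star $S_v$, conclude $v\notin S$, and then propagate along directed paths in $Q^*$ using the cross-intersection property (d) to force all leaves of each successive star into $S$. Your write-up is in fact slightly more careful than the paper's, since you explicitly formulate the ``dichotomy'' (that $x_i\notin S$ forces all leaves of $S_i$ into $S$, and in particular forces $S_i$ to be non-trivial), and you treat separately the degenerate case where $S_v$ is trivial; the paper simply picks ``an independent set $I$ of $\mathbb{S}$ which contains any leaf of $S_0$'' and remarks parenthetically that all $S_i$ along the path except possibly the last have leaves.
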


\begin{proof}
To see this, consider a vertex $v$ in $Q$ which can reach every vertex $x$ of $X$
in $Q^*$ by an oriented path $v=x_0\rightarrow x_1\rightarrow \dots \rightarrow x_k=x$.
For all $i$, we denote by $S_i$ the star with center $x_i$ (observe that they all have 
leaves, apart possibly
$S_k$). 
Consider an independent set $I$ of $\mathbb{S}$ which contains any leaf of $S_0$. 
Since $I$ is an independent 
set, it does not contain $x_0$, and hence by definition of normal cover
$I$ must contain all the leaves of $S_0$. Now since $x_0\rightarrow x_1$, there 
is a leaf of $S_0$ adjacent to $x_1$. In particular, $x_1$ is not in $I$, implying that 
every leaf of $S_1$ belongs to $I$. Applying the same argument, all leaves 
of $S_i$ belong to $I$, for each $i$. Since this argument can be done for every 
oriented path starting at $v$, any star $S_j$ whose center is reachable from 
$v$ in $Q^*$ by a directed path has all its leaves contained in $I$. In particular, all the leaves with 
centers in $X$ form an independent set.
\end{proof}

This lemma provides a roadmap to a disproof of the normal graph conjecture. Namely,
a normal high girth dense enough random graph will have a star covering  with large 
out-sections, in particular, large independent sets. By tuning the density we can 
contradict the typical stability of such graphs. To achieve this, we need
to introduce the following definitions:

\bigskip

Given a graph $G$ and a subset $Q$ of its vertices partitioned into $Q_1,..., Q_{10}$, we say that
$w \in V\setminus Q$ is a \emph{private neighbor} of a vertex 
$v \in Q_i$ if $w$ is adjacent to $v$ but not to any other vertex in $Q_1,..., Q_i$. 
For each vertex $v$ in some $Q_i$, let $S_v$ be the (possibly trivial) star 
centered at $v$ consisting of $v$ and its private neighbors. Note that by definition, for any distinct vertices
$v, v'$ in $Q$, the
stars $S_v$ and $S_{v'}$ are vertex-disjoint. Thus, $Q$ and the set of 
stars $S_v$ form a star system for the graph induced by the set of vertices in $Q$ and their 
private neighbors. 
We define as previously our oriented graph 
$Q^*$ based on the star system consisting of $Q$ and the set of stars $S_v$. 
Observe that by definition of private neighbors, any arc $u\rightarrow v$ of $Q^*$ with 
$u\in Q_i$ and $v\in Q_j$ satisfies $i<j$. Given $Q_1,...,Q_{10}$ in some graph $G$, 
we refer to this star system as the {\it private} star system over $Q_1,..., Q_{10}$.
The directed graph $Q^*$ is called the {\it private} directed graph over $Q_1,..., Q_{10}$.

Let us now turn to our fundamental property:

\bigskip

\textbf{Property $JQ$:}

\medskip

We say that $G$ satisfies property $JQ$ if for every choice of pairwise disjoint subsets of vertices
$J, Q_1,..., Q_{10}$, with $|J| \leq n^{0.91}$ and $\frac{n^{0.9}}{1000} \leq |Q_i| \leq \frac{n^{0.9}}{500}$
for all  $i=1, \dots ,10$, the private directed graph $Q^*$ over $Q_1,..., Q_{10}$ defined on the induced 
subgraph $G\setminus J$ contains an out-section $X$ such that the sum of the number of private neighbors
corresponding to all the vertices of $X$ is at least $n^{0.95}$.

\bigskip

The crucial point is that a random graph  $G:=G_{n,p}$ with $p = n^{-9/10}$ will 
almost surely have property $JQ$, as claimed by the lemma below.

\begin{lemma} \label{lem:JQ}
$\PP[G  \in JQ] = 1-o(1).$ 
\end{lemma}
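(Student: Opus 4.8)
The plan is to union-bound over all admissible tuples $(J,Q_1,\dots,Q_{10})$. Since $|J|\le n^{0.91}$ and each $|Q_i|=\Theta(n^{0.9})$, the number of such tuples is at most $\binom{n}{n^{0.91}}\prod_{i}\binom{n}{|Q_i|}=\exp\!\big(O(n^{0.91}\log n)\big)$, the count being dominated by the choice of $J$. Hence it suffices to prove that for each fixed admissible tuple the probability that the conclusion of property $JQ$ fails is at most $\exp\!\big(-\omega(n^{0.91}\log n)\big)$ — in fact I would aim for a bound of the form $\exp(-n^{1-o(1)})$. Observe that both $Q^*$ and all the private-neighbour sets depend only on the bipartite graph $H$ between $W:=V(G)\setminus(Q\cup J)$ and $Q:=Q_1\cup\dots\cup Q_{10}$, every edge of $H$ being present independently with probability $p=n^{-9/10}$; this is the only randomness that needs to be controlled, using the Chernoff bound of Theorem~\ref{chernoff} throughout.

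Fixing the tuple, I would build a large out-section greedily. Starting from a root $v\in Q_1$, set $R_1=\{v\}$ and, for $i=1,\dots,9$, let $R_{i+1}$ be the set of vertices of $Q_{i+1}$ that are out-neighbours in $Q^*$ of some vertex of $R_1\cup\dots\cup R_i$ (the arcs of $Q^*$ go from lower-index to higher-index $Q_j$'s, so $R_{i+1}\subseteq Q_{i+1}$ is well defined). Then $X:=R_1\cup\dots\cup R_{10}$ is an out-section witnessed by $v$, and since distinct vertices of $Q_{10}$ have disjoint private neighbours, its private-neighbour set has size at least $\sum_{x\in R_{10}}|\mathrm{PN}(x)|$, where $\mathrm{PN}(x)$ denotes the private neighbours of $x$. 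The heuristic for the growth is: each vertex of $R_i$ has $\Theta(np)=\Theta(n^{0.1})$ private neighbours — a constant fraction of its $\Theta(n^{0.1})$ neighbours survives the privateness condition because $|Q_1\cup\dots\cup Q_i|\,p=O(1/100)$ and $|J|,|Q|$ are $o(n)$ — and each private neighbour is adjacent to a fresh vertex of $Q_{i+1}$ with probability $\approx|Q_{i+1}|\,p=\Theta(1/1000)$, collisions being negligible since $|R_i|\,n^{0.1}\ll|Q_{i+1}|$ for all $i\le 10$. Thus $|R_{i+1}|\approx|R_i|\cdot n^{0.1}/1000$, so $|R_{10}|\approx(n^{0.1}/1000)^{9}=n^{0.9}/1000^{9}$ and $\sum_{x\in R_{10}}|\mathrm{PN}(x)|\approx|R_{10}|\,n^{0.1}=n/1000^{9}$, which exceeds $n^{0.95}$ once $n$ is large — this is exactly why ten layers are both needed and sufficient.

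The difficulty is entirely probabilistic. For a single fixed root, the bound $|R_{i+1}|\ge|R_i|\,n^{0.1}/2000$ fails with probability only $\exp\!\big(-\Omega(|R_i|\,n^{0.1})\big)$, which for small $i$ (say $i=1$) is merely $\exp(-\Omega(n^{0.1}))$ — hopelessly weak against the union bound above. The remedy is to run the exploration from all $|Q_1|=\Theta(n^{0.9})$ roots of $Q_1$ simultaneously, revealing $H$ in stages: first the $W$–$Q_1$ edges, then the edges from the level-$1$ private neighbours to $Q_2$, then the $W_2$–$Q_2$ edges (where $W_j$ is the set of $w\in W$ whose first neighbour among the $Q_i$'s lies in $Q_j$), then the $W_2$–$Q_3$ edges, and so on. The crucial point is that at each stage the quantity governing growth — for instance the total number, over all roots $v$, of private neighbours of $v$ that are adjacent to $Q_2$ — can be rewritten as a sum of independent indicators over the relevant $W$-vertices (the private-neighbour pools of distinct roots are pairwise disjoint, and within a layer each $W$-vertex is owned by at most one vertex of the preceding layer), hence concentrates to within a constant factor of its mean except with probability $\exp(-\Omega(n))$; feeding this into a union bound over the at most $2^{|Q_1|}=\exp(O(n^{0.9}))$ candidate ``sets of failed roots'' shows that, except with probability $\exp(-n^{1-o(1)})$, a linear-sized fraction of the roots still grows at the correct rate. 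Iterating this through all ten layers, with care so that each stage consumes fresh randomness and the error probabilities multiply, produces a root whose out-section has at least $n^{0.95}$ private neighbours. The main obstacle — and where the argument is genuinely delicate — is pushing this many-root analysis through the middle layers while keeping the stages independent enough to preserve an $\exp(-n^{1-o(1)})$ failure bound at every step; any argument that commits to a single root, or to a single reached set $R_i$ before performing the $i$-th expansion, is provably too lossy to survive the union bound over the choices of $J$.
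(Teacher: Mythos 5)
Your overall strategy is correct and matches the paper's: union-bound over all admissible tuples $(J,Q_1,\dots,Q_{10})$ (there are $\exp(O(n^{0.91}\log n))$ of them), and for each fixed tuple beat this with a per-tuple failure bound of $\exp(-\Omega(n))$, obtained by growing an out-section layer by layer through $Q_1,\dots,Q_{10}$ and using Chernoff at each layer. Your quantitative calibration is also right: each layer multiplies the reach by $\Theta(d)=\Theta(n^{0.1})$, so ten layers produce an out-section whose $Q_{10}$-vertices alone carry $\Theta(n)$ private neighbours, comfortably above $n^{0.95}$. You also correctly identify the central danger — working with a single root, or fixing the reached set before expanding, destroys the error bound — so the analysis must be done for all roots at once with nontrivial union bounds at each stage.

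Where your proposal differs from the paper is in how this ``many roots at once'' step is packaged, and this is also exactly where you acknowledge the argument is still incomplete. The paper manufactures a clean inductive invariant: after processing $Q_1,\dots,Q_i$ one has a family of \emph{pairwise disjoint} out-sections, each of size $\approx d^{i-1}$, whose number is $\approx n/d^i$ (so the total is $\Theta(n/d)$ at every level). To advance one level it contracts each out-section to a super-vertex, forms the induced random bipartite graph to $Q_{i+1}$, and invokes the partial-cover lemma (Lemma~\ref{lem: bipartite}); that lemma hands back, with probability $1-e^{-\Omega(n)}$, a new family of disjoint subsets of $Q_{i+1}$ of the right sizes, each reachable from a distinct super-vertex. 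Disjointness is preserved for free, the randomness used is exactly the fresh edges into $Q_{i+1}$, and there is no need to track overlapping per-root reach sets or to take a union bound over $2^{|Q_1|}$ candidate failed-root sets. Your per-root BFS plus failed-root-set union bound is in the same spirit and can probably be pushed through, but the overlap bookkeeping in the middle layers (which your final paragraph flags as delicate) is precisely what the contraction + partial-cover device is designed to avoid. Two further small points you would need to pin down: the definition of ``private neighbour'' in the paper is nested (a private neighbour of $v\in Q_i$ avoids $Q_1\cup\dots\cup Q_i$ only, not all of $Q$), which is what makes $Q^*$ acyclic and the layer-to-layer exploration monotone; and before any of this one needs the preliminary Chernoff step showing that most vertices of each $Q_i$ actually have $\ge d/2$ private neighbours in the surviving $B_i$, as the paper does with the events $N_{Q_i}$.
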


We postpone the proof of this lemma to the end of the paper.
Now, we show that Lemmas \ref{lem:properties}, \ref{lem:JQ} and Claim \ref{claim: star-covering alpha}
are sufficient to prove our main theorem.

\begin{proof}[Proof of Theorem \ref{thm:main}]

We consider  a random graph $G:=G_{n,p}$ with $p = n^{-9/10}$.
Using Lemma~\ref{lem:properties} and Lemma \ref{lem:JQ} and the union bound, 
for $n$ sufficiently large, there
exists a $n$-vertex graph $G$ satisfying: (a) $G$ has less than $4n^{0.7}$
cycles of length at most seven, (b) $\alpha (G)<10n^{0.9}\log n$, (c)
$G$ has maximum degree at most $2n^{0.1}$, (d) $G$ has property $JQ$.

Consider a set $S$ of at most $4n^{0.7}$ vertices in $G$ intersecting 
all cycles of length at most 7. Note that $G[V \setminus S]$
has girth at least 8. Assume now for contradiction that 
$G[V\setminus S]$ is a normal graph. By Claim \ref{claim: star-covering alpha},
there is a star 
covering $(\mathbb{C}, \mathbb{S})$ of $G[V\setminus S]$ with the number of stars at most $10n^{0.9}\log n$. 
Let $S'$ be the set of those stars which have size at most $10^{10}\log n$. Let $J = S \cup S'$.
Observe that $|J|\leq 10^{10}\log n \cdot 10n^{0.9}\log n + 
4n^{0.7} < n^{0.91}$. Now, consider $G[V\setminus J]$ and
call $Q$ the set of centers of the remaining stars. 
Observe that the set of stars centered at $Q$ still 
form a star covering of $G[V\setminus J]$. Indeed, $\mathbb{C}$ and $\mathbb{S}$
restricted to $G[V\setminus J]$ is a star covering.

Note that since $|Q| <  10n^{0.9}\log n$, $|V\setminus (J \cup Q)| > n - n^{0.91} - 10 n^{0.9} \log n$. 
Now, since $Q$ is a dominating set in $G[V\setminus J]$, and the degree of every
vertex in $G[V\setminus J]$ is at most $2n^{0.1}$, it follows that $|Q| > \tfrac{n^{0.9}}{3}.$ 

We now define the directed 
graph $Q^*$ on $Q$ based on the star covering of $G[V\setminus J]$. 

\begin{claim}
Every strongly connected component $C$ of $Q^*$ has 
size at most $n^{0.9}/1000$.
\end{claim}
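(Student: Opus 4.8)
The plan is to argue by contradiction and exploit the fact that, after passing to $G[V\setminus J]$, every star centered in $Q$ is ``large''. Suppose some strongly connected component $C$ of $Q^*$ satisfies $|C| > n^{0.9}/1000$. The first step is to observe that such a $C$ is automatically an out-section of $Q^*$: picking any $v \in C$, strong connectivity of $C$ (using the degenerate length-$0$ path when the endpoint is $v$ itself) provides a directed path in $Q^*$ from $v$ to every $x \in C$, which is exactly the definition of an out-section with witness $v$. Since $G[V\setminus J]$ is triangle-free (it is an induced subgraph of the girth-$\geq 8$ graph $G[V\setminus S]$), and the stars centered at $Q$ form a star covering of $G[V\setminus J]$ — hence $G[V\setminus J]$ is normal — as already noted in the text, Lemma~\ref{lem:outsection} applies: the set $I$ of all leaves of the stars whose centers lie in $C$ is an independent set of $G[V\setminus J]$, and therefore an independent set of $G$ as well.

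The second step is a counting estimate. By the choice of $S'$, every star that survives into $Q$ has size strictly greater than $10^{10}\log n$, so each such star (in particular each star centered in $C$) is nontrivial and has more than $10^{10}\log n - 1$ leaves. The stars of a star system are vertex-disjoint, so their leaf sets are pairwise disjoint, and therefore
\[
  |I| \;>\; |C|\,\bigl(10^{10}\log n - 1\bigr) \;>\; \frac{n^{0.9}}{1000}\,\bigl(10^{10}\log n - 1\bigr) \;>\; 10\,n^{0.9}\log n
\]
for $n$ sufficiently large. This contradicts property (b) of $G$, namely $\alpha(G) < 10\,n^{0.9}\log n$, so no strongly connected component can have size exceeding $n^{0.9}/1000$.

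I do not expect a genuine obstacle here: the argument is essentially bookkeeping layered on top of Lemma~\ref{lem:outsection} and Lemma~\ref{lem:properties}(b). The only points that need a moment's care are (i) confirming that a strongly connected component really does meet the definition of an out-section (allowing the trivial path from the witness vertex to itself, and noting that a singleton component is harmless), and (ii) checking that restricting $(\mathbb{C},\mathbb{S})$ to $G[V\setminus J]$ still yields a star covering, so that Lemma~\ref{lem:outsection} is legitimately applicable to $G[V\setminus J]$ — both of which have already been established in the text immediately preceding the claim.
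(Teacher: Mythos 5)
Your proof is correct and takes essentially the same approach as the paper: observe that a strongly connected component is an out-section, apply Lemma~\ref{lem:outsection} to get a large independent set formed by the leaves of the (all large, by the removal of $S'$) stars centered in $C$, and contradict the bound $\alpha(G) < 10n^{0.9}\log n$. The only cosmetic difference is your careful subtraction of $1$ per star when counting leaves, which the paper elides without harm.
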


\begin{proof}
Observe that $C$ is an out-section of any of its 
vertices, hence by Lemma \ref{lem:outsection} the set of leaves of stars with centers 
in $C$ is an independent set. Since each star in the star covering of $G[V\setminus J]$ has size at least
$10^{10}\log n$, it follows that $G[V\setminus J]$ has an 
independent set of size $10^{10}\log n \cdot |C|$. The result follows now from 
the fact that $\alpha(G) < 10n^{0.9} \log n$.
\end{proof}

Let $C_1,\dots ,C_k$ be the strongly connected components of $Q^*$,
enumerated in such a way that all arcs $xx'$ of $Q^*$
with $x\in C_i$ and $x'\in C_j$ satisfy $i\leq j$.

We concatenate subsets of the components $C_1, \dots, C_k$ into blocks $Q_1, Q_2, ..., Q_{10}$ with 
$Q_1 = C_1C_2...C_{i_1}$,  $Q_2 = C_{i_1 + 1}...C_{i_2}$,..., $Q_{10} = C_{i_9 + 1}...C_{i_{10}}$
for some $i_1,..., i_{10}$ such that 
for each $Q_i$, $1 \leq i \leq 10$,  $n^{0.9}/1000 \leq |Q_i| \leq n^{0.9}/500$. This is clearly possible
since for each $i \leq k$, $|C_i| < n^{0.9}/1000$ and $|Q| > n^{0.9}/3$. 
   
The crucial remark now is that if a vertex $v$ of $G\setminus (J\cup Q)$
is a private neighbor of a vertex $x_i$ in $Q_i$, then
the edge $x_iv$ must be an edge of the star covering. Indeed,
$v$ has a unique neighbor in $Q_1\cup \dots \cup Q_i$ by definition,
and any edge $vx_j$ where $x_j$ is in $Q\setminus (Q_1\cup \dots \cup Q_i)$
cannot belong to $\mathbb{C}$
since this would imply $x_j\rightarrow x_i$. Now, by property 
$JQ$, we know that the private directed graph $Q'^*$ defined on the stars formed 
by the private neighbors of the $Q_i$'s has an out-section $O$ of size at 
least $n^{0.95}$.
Since $Q'^*$ is a subdigraph of $Q^*$, the set $O$ is also an out-section 
of $Q^*$. Hence the set of leaves with centers in $O$ forms an independent 
set of size $n^{0.95}$ by Lemma \ref{lem:outsection}, contradicting the
fact that $\alpha(G) < 10n^{0.9} \log n$.

%
%
%


\end{proof}

\section{Proof of Lemma \ref{lem:JQ}}

In this section, we prove Lemma \ref{lem:JQ} to conclude the proof of Theorem \ref{thm:main}.

\begin{proof} 
[Proof of Lemma \ref{lem:JQ}]

We will prove that $\PP[JQ^c] = o(1)$. 
We first fix the sets $J, Q_1,..., Q_{10}$. Note that 
there is at most $\sum_{i=1}^{n^{0.91}} \binom{n}{i}
\leq 2n^{n^{0.91}}$ possible sets for $J$ 
and at most $(\sum_{i=n^{0.9}/1000}^{n^{0.9}/500} \binom{n}{i})^{10} \leq 2^{10}n^{n^{0.9}/50} $
sets for the $Q_1,..., Q_{10}$. Thus, there are at most $2^{11} n^{2n^{0.91}}$ ways to fix
the sets $J, Q_1,..., Q_{10}$. We will recall this fact later; in the sequel, the sets 
$J, Q_1,..., Q_{10}$ are fixed.


Denote by $B:= G \setminus \{\cup_{i=1}^{10}Q_i \cup \{J\} \}$. 

Note that $|B| \geq 
n - n^{0.91} - \tfrac{n^{0.9}}{50} \geq n - 2n^{0.91}$.     
For a vertex $v \in Q_1$, let $D_v$ be the number of neighbors of $v$ in $B$.
Let $D_{Q_1}$ be the event that at least $0.01|Q_1|$ vertices $v$ in $Q_1$ have   $D_v \notin (0.95d, 1.01d)$. 
We recall that $n^{0.9}/1000 \leq |Q_1| \leq n^{0.9}/500$.    

    Note that, for some sufficiently small $\delta, \epsilon >0$ 
    \begin{eqnarray*}
        \PP[D_{Q_1}]            &\leq& \binom{|Q_1|}{0.01|Q_1|}(\PP[D_v \notin (0.95d, 1.01d)])^{0.01|Q_1|}\\
        &\leq& \binom{n/500d}{n/50000d}(\PP[D_v \notin (0.95d, 1.01d)])^{n/100000d}\\ 
                                &\leq& (n/500d)^{n/50000d}(e^{-\epsilon d})^{n/10^5d}\\
                                &<& e^{-\delta n}.
    \end{eqnarray*}
    where we used the fact that $D_v$ is a binomial random variable
    with mean $p|B| \in (0.96d, d)$ and thus Chernoff's inequality applies. 
    
    \bigskip
    
    For a vertex $v \in B$, let $X_v$ be the random variable counting the number of vertices in $Q_1$ adjacent to $v$,  
    and $X$ be the number of vertices in $B$ that have degree equal to 1 in $Q_1$. 
    Then $X$ is a binomial random variable. Now,
   \begin{eqnarray*}
       \EE[X] &=& |B| \times \PP[X_v = 1]\\
			 &\geq& 0.96n \PP[X_v = 1]\\
                         &\geq& 0.96n |Q_1| \tfrac{d}{n}(1-d/n)^{|Q_1|-1}\\
                         &\geq& 0.96|Q_1|d e^{-1/250}\\
                         &\geq& 0.95|Q_1|d.
   \end{eqnarray*}
   By Chernoff's inequality, since $\EE[X] \geq 0.95 n /1000$, for some $\delta > 0$ sufficiently small,
   \begin{eqnarray*}
       \PP[\{X < 0.9|Q_1|d\}]  &\leq& e^{-\delta n}.
   \end{eqnarray*}
    
    
%
    
    \medskip
    
    Next, let $Z_E$ be the number of edges from $Q_1$ to $B$. Note that $Z_E$ is a
        binomial random variable
        with mean $\mu = |Q_1||B|\frac{d}{n}$. Note that $\mu \in (0.96|Q_1|d, |Q_1|d)$.
        Then, for some $\delta > 0$ sufficiently small,
     \begin{eqnarray*}
         \PP[\{Z_E \notin (0.95|Q_1|d, 1.01|Q_1|d)\}] &\leq&  e^{-\delta n},  
     \end{eqnarray*}
     by Chernoff's inequality. Now, let $M$ be the event
     \begin{eqnarray*}
         M:= \{Z_E \in (0.95|Q_1|d, 1.01|Q_1|d)\} \cap D^c_{Q_1} 
         \cap \{X > 0.9|Q_1|d \}.
     \end{eqnarray*}
     
     Clearly, $$\PP[M^c] \leq 3e^{- \delta n}.$$
%
%
       Thus,  $$\PP[M] \geq 1- 3e^{-\delta n}.$$

    \bigskip
%
    Let $N_{Q_1}$ be the event that at least $|Q_1|/2$ vertices in $Q_1$ have at least $d/2$ private neighbors.
    We claim that if the event $M$ holds then so does $N_{Q_1}$.  
    
    Assume that $M$ holds. Let us call an edge $e$ a \emph{good} edge if its endpoint in $Q_1$, 
    say $v$, has $D_v \in (0.95d,1.01d)$ and its endpoint 
    in $B$ has degree exactly 1 in $Q_1$. We compute the number 
    of non-good edges. First, let us count the number of 
    edges whose endpoint in $B$ has degree greater than 1.
    
    Note that the number of vertices in $B$ that have degree 1 in $Q_1$ is at least $0.9 |Q_1|d$.
    These vertices contribute at least $0.9 |Q_1|d$ edges. Thus, the number of edges between
    $Q_1$ and $B$ whose endpoint
    in $B$ is not of degree 1 is at most $1.01|Q_1|d - 0.9|Q_1|d \leq 0.11 |Q_1|d$.
    
    Next, we count the number of edges between $Q_1$ and $B$ 
    whose endpoint in $Q_1$, say $v$, satisfies $D_v \notin (.95d, 1.01d)$.
    Since at least $0.99|Q_1|$ vertices in $Q_1$ have degree in the interval $(.95d, 1.01d)$, 
    they contribute to at least
    $.99 \cdot 0.95 |Q_1|d$ edges. The remaining number of edges is 
    at most $1.01|Q_1|d - 0.99 \cdot 0.95 |Q_1|d \leq 0.07|Q_1|d$. 
    
    Thus, the number of edges which are not good is at most $0.18|Q_1|d$.
%

    Now, we prove our claim that if $M$ holds then $N_{Q_1}$ holds as well. 
    We recall again that at least $0.99|Q_1|$ vertices in $Q_1$ have degree
    at least $0.95d$ in $B$. Let us compute the number of 
    vertices in $Q_1$ (called \emph{bad} vertices) which do not have at least $d/2$ private neighbors.
    By the remark above, the number of bad vertices which have degree at most $0.95d$ in $B$ is at most
    $0.01|Q_1|$. Thus, it suffices to bound the number of bad vertices which have degree at least $0.95d$ in $B$.
    Such a vertex is adjacent to at least $0.45d$ non-good edges since its degree is at least
    $0.95d$. Since the total
    number of non-good edges is at most $0.18|Q_1|d$ it follows that the number of all bad vertices is easily at most 
    $\tfrac{0.18|Q_1|}{0.49} + 0.01|Q_1| < |Q_1|/2$. Therefore, at 
    least $|Q_1|/2$ vertices in $Q_1$ have at least $d/2$ private neighbors, proving
    the claim. Summarizing,
    \begin{eqnarray*}
        \PP[M] &=& \PP[N_{Q_1} \cap M] + \PP[N^c_{Q_1} \cap M]\\
                 &=& \PP[N_{Q_1} \cap M].
    \end{eqnarray*} 
    Thus, $$\PP[N_{Q_1}] \geq 1- 3e^{-\delta n}.$$
  Now, define $B_2 = B \setminus \Gamma(Q_1)$, where $\Gamma(Q_1)$ is
  the set of neighbors of $Q_1$ in $B$. Define $N_{Q_2}$ to be 
  the event that at least $|Q_2|/2$ vertices in $Q_2$ have at least
  $d/2$ private neighbors in $B_2$. We would like to show that
  $\PP[N_{Q_2}]$ holds with high probability.
  First, note that $\PP[|\Gamma(Q_1)| > n/400] \leq \PP[Z_E > n/400] < e^{-\delta n}$.
  
  Thus, it suffices to bound $\PP[N_{Q_2} \mid \{|B_2| > |B| - n/400\}]$.
  By an identical argument as for $N_{Q_1}$, we know that the probability of this event
  is at least $1 - O(e^{- \delta_2n})$, for some $\delta_2 >0$. Indeed, the only assumption that we need
 that was used before is that $|B_2| \geq 0.96n$, which holds as $|B| \geq n - 2n^{0.91}$. 
 
 Thus, $\PP[N_{Q_2}] \geq (1-e^{-\delta n})(1 - O(e^{- \delta_2n})) \geq 1- e^{-\beta n},$ for some $\beta >0$.
%
%
  
%
  For each $i$,
  $2 \leq i \leq 10$, we define the
  sets $B_i$ by $B_{i+1} :=   B_i \setminus \Gamma(Q_i)$ and $N_{Q_i}$ 
  as the event that at least $|Q_i|/2$ vertices in $Q_i$ have at least
  $d/2$ private neighbors in $B_i$. By repeating the same argument as before
  we obtain that with probability at least $1 - O(e^{-\epsilon n})$ the 
  event $N_{Q_i}$ holds, for some $\epsilon > 0$. 
  Indeed, the size of the $B_i$'s almost surely never decreases by more than
  $n/400$ at a time and thus for each $i$, $|B_i| > |B| - n/40 > 0.97n$, allowing us to guarantee
  that the event $M$ holds with high probability in each iteration.
  
  It follows that $$\PP[(\cap_{i=1}^{10} N_{Q_i})^c] = O(e^{- \epsilon' n}),$$
  for some $\epsilon' >0$.

\medskip

Armed with the fact that the event $\cap_{i=1}^{10} N_{Q_i}$ holds with very
high probability, we will finish the proof. We will say that a vertex $v$ in some $Q_i$ 
is \emph{rich} if $v$ has at least $d/2$ private neighbors; similarly, a set $S$ of vertices
is called \emph{rich} if every vertex of $S$ is rich. 
Let $\tau$ be an ordering of the vertices of $G \backslash J$; we will use
this ordering a bit later.


Consider the following set of events. 
We remark that for our purposes we are only interested in the case $i=10$. 

\bigskip
{\it
There exist positive constants $\epsilon_i$ and $C_i$ such that in each $Q_i$, $2 \leq i \leq 10$,
there exist at least
$\frac{\epsilon_i n}{d^i}$ rich, disjoint out-sections of $Q^{*}$,
each of size at least $\frac{d^{i-1}}{C_i}$.  }

\bigskip
Let $J_i$ be the $i^{th}$ event in the above statement.

We inductively prove the following claim (*): for appropriate
values of $\epsilon_i$ and $C_i$, there exist $\epsilon'_i > 0$ such that
$\PP[J_i] \geq 1 - e^{-\epsilon'_i n}$.

\bigskip
We first show that $\PP[J_2] \geq 1 - e^{-\epsilon'_2 n}$ for some values of $\epsilon_2, C_2$ and
$\epsilon'_2$. Note that 

$$
\PP[J_2] \geq \PP[J_2 \mid N_{Q_1} \cap N_{Q_2}] \PP[N_{Q_1} \cap N_{Q_2}]
\geq \PP[J_2 \mid N_{Q_1} \cap N_{Q_2}] (1- O(e^{-\epsilon' n})).
$$

Thus, it is sufficient to show that
$\PP[J_2 \mid N_{Q_1} \cap N_{Q_2}] \geq 1 - e^{-c_1 n}$, for some constant $c_1 > 0$.

We apply Lemma \ref{lem: bipartite}. We construct the following auxiliary bipartite graph.
Consider the bipartite graph $H_1 = (A_1, A_2)$, where the partite sets $A_1$ and
$A_2$ are the set of rich vertices of $Q_1$ and $Q_2$, respectively. 
Note that conditional on $N_{Q_1} \cap N_{Q_2}$, $\min\{|A_1|, |A_2|\} \geq \frac{n}{2000d}$.
We put an edge between $v_1 \in A_1$ and $v_2 \in A_2$ in $H_1$
if at least one of the first $\lfloor \frac{d}{2} \rfloor$ private neighbors of $v_1$ under
the ordering $\tau$ is adjacent to $v_2$.

\begin{claim}
$H_1$ is a random bipartite graph where the probability
of any edge is $p_1 = 1-(1-p)^{\lfloor d/2 \rfloor}$ with the edges appearing
independently. 
\end{claim}

\begin{proof}[Proof of Claim]
We note the following:
let $v_1, v'_1$ be any elements
in $Q_1$ (not necessarily distinct) which have distinct private neighbors $w_1$ and 
$w'_1$, respectively.
Then, conditional on $N_{Q_1} \cap N_{Q_2}$, it is still the case that
$\PP[w_1v_2 \in E(G)] = \PP[w'_1v_2 \in E(G)]=p$, 
and furthermore these two events are still independent.
It follows that the probability of any edge in $H_1$ is $1-(1-p)^{\lfloor d/2 \rfloor}$
and that these edges appear independently. 
\end{proof}

It is easily seen that
$\frac{d^2}{4n} \leq p_1 \leq \frac{d^2}{n}$. We apply Lemma \ref{lem: bipartite}. 
 
Indeed, $10^{100}p_1^{-1} < 4 \cdot 10^{100} n/d^{2} < n/2000d \leq \min\{|A_1|, |A_2\}$, if $n$ is sufficiently
large. 
Thus, $H_1$ has a partial cover and $e(A_1, A_2) \in [0.99p_1|A_1||A_2|, 
1.01p_1|A_1||A_2|]$ with
probability at least $ 1- e^{-cp_1|A_1||A_2|} > 1- 
e^{-c_1 n}$, for some constant $c_1 > 0$. Let $(x_1,Y_1),..., (x_k, Y_k)$ be the set of pairs
in the partial cover.
It follows that $|Y_i| = \lceil e(A_1, A_2)/3|A_1| \rceil > d/C_2$ for some $C_2 > 0$
and at least $|A_2|/3$ of the vertices of $A_2$ are covered by the $Y_i$'s. Since
$e(A_1, A_2) < 1.01p_1|A_1||A_2|$, it follows that $k > \frac{\epsilon_2 n}{d^2}$
for some $\epsilon_2 > 0$. Finally, note that each $Y_i$ is a rich out-section. 
Indeed, each element of $Y_i$ is a vertex of $Q_2$ that is adjacent to at least
one of the private neighbors of $x_i$ and all vertices of $A_2$ are rich. 
Now, the facts $|Y_i| > d/C_2$ and 
$k > \frac{\epsilon_2 n}{d^2}$ are sufficient to establish that
$\PP[J_2 \mid N_{Q_1} \cap N_{Q_2}] \geq 1 - e^{-c_1 n}$. Thus, 
$\PP[J_2] \geq 1 - e^{-\epsilon'_2 n}$, for some $\epsilon'_2 > 0$.

%

The general case is similar. Suppose that we know that $\PP[J_{i}] \geq 1 - 
e^{-\epsilon'_{i} n}$ with the corresponding constants $C_{i}$ and $\epsilon_{i}$. 
We will prove that $\PP[J_{i+1}] \geq 1 - 
e^{-\epsilon'_{i+1} n}$, for some constant $\epsilon'_{i+1}$. Note that 
$$\PP[J_{i+1}] \geq \PP[J_{i+1} \mid J_{i} \cap N_{Q_{i+1}}]\PP[J_{i} \cap N_{Q_{i+1}}] 
\geq \PP[J_{i+1} \mid J_{i} \cap N_{Q_{i+1}}](1-e^{- \epsilon'_{i} n} - e^{-\epsilon' n}).$$

Therefore, it suffices to show that $\PP[J_{i+1} \mid J_{i} \cap N_{Q_{i+1}}] \geq 1 - e^{-c_i n}$,
for some constant $c_i > 0$.

 Suppose that $J_{i} \cap N_{Q_{i+1}}$ holds. We argue similarly as for the case $i=1$. 
 In the set $Q_i$ we will have at least 
$\epsilon_i n/d^{i}$ disjoint out-sections
each of which is rich and has size at least $d^{i-1}/C_i$ for some constants $C_i, \epsilon_i > 0$.


Consider the following bipartite graph $H_i =(A_i, A_{i+1})$.
For each of the disjoint, rich out-sections guaranteed by $J_i$ we will have a vertex $v$ in 
$A_i$, and $A_{i+1}$ will consist of the rich vertices of $Q_{i+1}$.
Note that conditional on $J_{i} \cap N_{Q_{i+1}}$, $\min\{|A_i|, |A_{i+1}|\} \geq 
\frac{\epsilon_i n}{d^i}$.
Let $v_i \in A_i$ and $v_{i+1} \in A_{i+1}$, and let $O_i$ be the out-section associated
with $v_i$. We put an edge between $v_i$ and $v_{i+1}$ in $H_i$ if at least one of the
first $\lceil d^{i-1}/C_i \rceil$ vertices of $O_i$ (under $\tau$) has one of its first
$\lfloor \frac{d}{2} \rfloor$ private neighbors (under $\tau$) adjacent to $v_{i+1}$.

\begin{claim}
$H_i$ is a random bipartite graph where the probability
of any edge is $p_i = 1- (1-p_1)^{\lceil d^{i-1}/C_i \rceil}$ with the edges appearing
independently. 
\end{claim}

\begin{proof}[Proof of Claim]
We note the following:
let $v_i, v'_i$ be any elements
in $Q_i$ (not necessarily distinct) which have distinct private neighbors $w_i$ and 
$w'_i$, respectively, and let $v_{i+1}$ be a vertex in $Q_{i+1}$.
Then, conditional on $J_{i} \cap N_{Q_{i+1}}$, it is still the case that
$\PP[w_iv_{i+1} \in E(G)] = \PP[w'_iv_{i+1} \in E(G)]=p$, 
and furthermore these two events are still independent.
It follows that the probability of any edge in $H_i$ is $1-(1-p_1)^{\lceil d^{i-1}/C_i \rceil}$
and that these edges appear independently. 
\end{proof}

%

It is easily seen that
$\tfrac{d^{i+1}}{4C_i n} < p_i < \tfrac{2d^{i+1}}{C_i n}$.

We again apply Lemma \ref{lem: bipartite}. 
Indeed, $10^{100}p_i^{-1} < 10^{100} \tfrac{4C_i n}{d^{i+1}} < \lceil \epsilon_i n/d^{i} \rceil = 
\min\{|A_i|, |A_{i+1}|\}$, if $n$ is sufficiently large.
Thus, $H_i$ has a partial cover and 
$e(A_i, A_{i+1}) \in [0.99p_i|A_i||A_{i+1}|, 1.01p_i|A_i||A_{i+1}|]$ with
probability at least $ 1- e^{-c p_i|A_i||A_{i+1}|} > 1- 
e^{-c_i n}$, for some constant $c_i > 0$. Let $(x_1,Y_1),..., (x_k, Y_k)$ be the set of pairs
in the partial cover.
It follows that the size of each $Y_j$ is $\lceil e(A_i, A_{i+1})/3|A_i| \rceil > d^i/C_{i+1}$
for some $C_{i+1} > 0$
and at least $|A_{i+1}|/3$ of the vertices of $A_{i+1}$ are covered by the $Y_i$'s. Since
$e(A_i, A_{i+1}) < 1.01p_i|A_i||A_{i+1}|$, it follows that $k > \frac{\epsilon_{i+1} n}{d^{i+1}}$
for some $\epsilon_{i+1} > 0$. 

Finally, note that each $Y_i$ is a rich out-section. 
Indeed, each element of $Y_i$ is a vertex of $Q_{i+1}$ that is adjacent to at least
one of the private neighbors of an out-section $O_i \subset Q_i$ associated with the vertex
$x_i$. Moreover, all the vertices of $A_{i+1}$ are rich. 
Now, the facts $|Y_i| > d^i/C_{i+1}$ and 
$k > \frac{\epsilon_{i+1} n}{d^{i+1}}$ are sufficient to establish that
$\PP[J_{i+1} \mid J_{i} \cap N_{Q_{i+1}}] \geq 1 - e^{-c_i n}$. Thus, 
$\PP[J_{i+1}] \geq 1 - e^{-\epsilon'_{i+1} n}$, for some $\epsilon'_{i+1} > 0$.


This proves the claim (*).

Recall that $d = n^{0.1}$. Now, if $J_{10}$ holds, then we have that there exist at 
least $\epsilon_{10} n/ d^{10} = \epsilon_{10} > 0$ rich out-sections
of size at least $d^9/C_{10}$. Therefore, there is at least one rich out-section of 
size at least $\frac{n}{C_{10}d}$.
Now, since this out-section is rich, each of its vertices has at least
$\lfloor d/2 \rfloor$ private neighbors, giving at least $n/2C_{10} > n^{0.95}$ 
total private neighbors. 

Now, suppose that the event $JQ$ does not hold. Then, by the argument above,
for \emph{some} fixed sets
$J, Q_1,..., Q_{10}$, we must have $J^{c}_{10}$.
As argued at beginning of this proof, the number of ways to fix such sets is at most  
$2^{11} n^{2n^{0.91}}$ and thus, 
$\PP[JQ^{c}] \leq 2^{11} n^{2n^{0.91}} \PP[J^{c}_{10}] < 2^{11} n^{2n^{0.91}}e^{-\epsilon'_{10}n}
= o(1)$.

%
%

This completes
the proof of the lemma.

\end{proof}

\section{Concluding remarks}
Our intent in this paper was to disprove the normal graph conjecture. 
In fact, by setting $p:= n^{-1 + 1/10g}$ and identically mimicking the argument 
one can prove that for every $g$, there exist graphs of girth $g$ which are not
normal.

\section{Acknowledgements}
The authors would like to thank G\'abor Simonyi for 
his insights and sharing his knowledge around this conjecture.
Our thanks are also due to Seyyed Hosseini, Bojan Mohar and Saeed Rezaei
for the interesting discussions concerning their manuscripts. We are also
very grateful to the anonymous referee for many helpful suggestions
which improved the presentation of this paper. 

\section*{References}

\end{document}